\newtheorem{theorem}{Theorem}[section]
\newtheorem{definition}[theorem]{Definition}
\newtheorem{corollary}[theorem]{Corollary}
\newtheorem{remark}[theorem]{Remark}
\newtheorem{example}[theorem]{Example}
\newtheorem{proposition}[theorem]{Proposition}
\newtheorem{open problem}[theorem]{Open problem}
\newcommand{\N}{\mathbb{N}}
\newcommand{\Z}{\mathbb{Z}}
\title% end with percent
 {Minimality of $5$-adic polynomial dynamical systems} % This is the full title of the paper
\title{Minimality of $5$-adic polynomial dynamical systems}
\author{Donggyun Kim, Youngwoo Kwon and Kyunghwan Song$^{*}$\\
	Department of Mathematics, Korea University, Seoul 02841, Republic of Korea \\
Corresponding Author's mail: heroesof@korea.ac.kr}
\begin{document}
\maketitle

\begin{abstract}

We characterize the dynamical systems consisting of the set of 5-adic integers and polynomial maps which consist of only one minimal component.

\end{abstract}

%\part{Use this type of header for very long papers only}
% use lowercase except for proper names

\section{Introduction}\label{sec:intro}

A dynamical system $(X,T)$ of a set $X$ and a self-map $T:X \rightarrow X$ studies the behavior of points in $X$ as $T$ is applied repeatedly. A \emph{minimal component in $X$ containing $x$ under $T$}, which is a generalization of the  orbit, is defined by
\[ \min\nolimits_{T}(x)=\{\, y\in X \mid T^i(y)=T^j(x) \ \text{ for some }\ i, j \in \Z_{\geq 0} \,\}. \]
Obviously the collection of all minimal components in $X$ forms a partition of $X$.

A goal of the study of the dynamical system is to decompose the set $X$ into minimal components for a given map $T$, which is called a \emph{minimal decomposition}. When $X$ is the set of $p$-adic integers, denote $\Z_p$, for a prime $p$, the  minimal decompositions of the dynamical systems for various maps $T$ are studied  in \cite{Coelho,Fan,FL}.

The number of components of the collection of the minimal components for the dynamical system $(\Z_p, T)$ is usually countably infinite \cite{Fan}. Hence it is an interesting case that the number of minimal components is only one, i.e., the set $\Z_p$ is a minimal component itself for a given map. We say the dynamical system is \emph{minimal} in this case.

There is a complete characterization of the minimal dynamical system on $\Z_p$ for polynomial maps when $p=2$ in \cite{L} and \cite{J}.  The latter paper used the ergodic terminologies instead of minimality. Complete results also exist for affine maps in \cite{Coelho,FLY} and for the square map in \cite{FsL} for all prime $p$. F. Durand and F. Paccaut \cite{DP} proved a complete characterization of the minimal dynamical system for polynomial maps on $\Z_p$ when $p=3$ and hoped that the method of this proof will also apply for $p\ge 5$.

In this paper, we provide a complete characterization of minimality of polynomial dynamical systems on the set of $p$-adic integers for $p=5$, using an extended method of the proof in \cite{DP}.

%Dynamical systems on $\mathbb{Z}_p$ and $\mathbb{Q}_p$ have been studied actively recently, and they often provide interesting applications in other areas such as physics and data analysis.
%We refer the reader to \cite{Gr}  for applications of dynamical systems in string theory, a modern unified theory of elementary particles, and  \cite{Mu} for applications of $p$-adic analysis in data analysis and data mining.

%Random mappings in dynamical systems in particular provide many applications in random number generation, computational number theory and cryptography. See, for example, \cite{BH,FO}. They are known useful  in probability theory as well:  the connection between stochastic process and $p$-adic analysis has been studied actively (see \cite{Z} for one of its latest results) and measure-invariant random mappings play important roles in this context (see, for example, \cite{LA}).

The rest of the paper is organized as follows. In Section 2, we give basic definitions of $p$-adic spaces and their dynamical systems. In Section 3, we state general properties of minimality for polynomial dynamical systems on $\Z_{p}$ and the results of minimality for polynomial dynamical systems on $\Z_{p}$ when $p=2$ and 3 in \cite{L} and \cite{DP}. In Section 4, we characterize the minimality for polynomial dynamical systems on $\Z_{p}$ when $p=5$.

\section{Preliminaries}\label{sec:pre}

We introduce basic theory of $p$-adic spaces and their dynamical systems, see for example \cite{R,BS}.

\subsection{$p$-adic spaces}

Let $\N$ be the set of positive integers, $\Z$ the set of integers and $p$ a prime number. We endow the ring $\Z/{p^n \Z}$ with the discrete topology for $n\in \N$ and the product $\prod_{n \in \N}\Z/{p^n \Z}$ with the product topology. Let $\varphi_n : \Z/p^{n+1} \Z ~\longrightarrow ~ \Z/p^n \Z$ be the canonical projection. Then we define the projective limit of $(\Z/p^{n}\Z, \varphi_{n})$, denoted by $\Z_{p}$, and call it the set of $p$-adic integers. This set can be seen as $\Z_p = \left\{ (x_n )_n \in \prod_{n\in \N} \Z/p^n \Z ~:~\varphi_n(x_{n+1})=x_n \right\}$,
which is a subset of $\prod_{n\in\N}\Z/p^n \Z$.
There are canonical projections $\pi_n : \Z_p ~\longrightarrow ~ \Z/p^n \Z$ and then $\varphi_n \circ \pi_{n+1} = \pi_n$.

A point $x_n$ can be represented by $x_n=a_0 + a_1 p+a_2 p^2 + a_3 p^3 + \cdots+a_{n-1}p^{n-1}$ with $a_i \in\{0,1,\cdots,p-1\}$ and the point $x = (x_n)$ in $\Z_{p}$ can be represented by $x=a_0 + a_1 p+a_2 p^2 + a_3 p^3 + \cdots$. Moreover, $\pi_n (x) = x_n$.

The $p$-adic order or $p$-adic valuation for $\Z_p$ is defined as $\nu_p : \Z_p \rightarrow \mathbb{Z}$
$$
\nu_p (x) = \begin{cases}\textrm{max}\{\nu \in \N : p^\nu | x \}&\textrm{if}~x \neq 0,\\
\infty &\textrm{if}~x =0.\end{cases}
$$

Then $p$-adic metric of $\Z_p$ is defined as $|\cdot |_p : \Z_p \rightarrow \mathbb{R}$
$$
|x|_p = \begin{cases}p^{-\nu_p (x)} &\textrm{if}~x \neq 0,\\
0 &\textrm{if}~x =0.\end{cases}
$$
With the $p$-adic metric, $\Z_p$ becomes a compact metric space. Note that $\Z/p^{n}\Z$ can be viewed as a subset of $\Z_{p}$,
$$\Z/p^{n}\Z=\{x\in\Z_{p}: a_{i}=0 \text{ for every } i\ge n\},$$
and so $\Z/p^n\Z$ is isomorphic to $\Z_p / p^n \Z_p$.

\subsection{Dynamical systems}
\begin{definition}
Given a set $X$, a \emph{$\sigma$-algebra} in $X$ is a collection $\mathcal{F}$ of subsets of $X$ such that;
\begin{enumerate}
\item[(1)] $\emptyset\in\mathcal{F}$.
\item[(2)] If $A\in\mathcal{F}$, then the complement of A is in $\mathcal{F}$, i.e., $A^c = X-A\in\mathcal{F}$.
\item[(3)] If $A_1 , A_2 , A_3,\cdots \in\mathcal{F}$, then $\bigcup A_i \in \mathcal{F}$.
\end{enumerate}
For a topological space $X$, the \emph{Borel $\sigma$-algebra} of $X$ is the $\sigma$-algebra generated by the open sets of $X$.
\end{definition}

\begin{definition}
Let $X$ be a compact metric space and $T : X \rightarrow X$  a homeomorphism. Let $\mathcal{B}$ be the Borel $\sigma$-algebra of $X$ and $\mu$ a probability measure defined on $\mathcal{B}$.
\begin{enumerate}
\item[(1)] $(X,T)$ is called a \emph{topological dynamical system}.
\item[(2)] A set $A\in\mathcal{B}$ is called \emph{invariant by} $T$ if $T^{-1}A=A$.
\item[(3)] The measure $\mu$ is called \emph{invariant by} $T$ if for every set $A\in\mathcal{B}$, $\mu(T^{-1}A)=\mu(A)$.
\item[(4)] The dynamical system $(X,T)$ is called \emph{ergodic for} $\mu$ if for every invariant set $A\in\mathcal{B}$, $\mu(A)=0$ or $\mu(A^c)=0$.

\item[(5)] The dynamical system $(X,T)$ is called \emph{minimal} if $A=\phi$ or $A=X$ whenever $A$ is a closed invariant set, or equivalently, every orbit $\textrm{Orb}_{T}(x)=\{T^{n}(x)~|~n\in\Z\}$ is dense in $X$, i.e., $\overline{\textrm{Orb}_T(x)}=X$. It follows that if $(X,T)$ is minimal, then $(X,T)$ is ergodic.

%\item[(6)] The dynamical system $(X,T)$ is called \emph{uniquely ergodic} if it has only one invariant ergodic probability measure. (i.e., $\mu(T^{-1}A)=\mu(A)$ and for every invariant set $A\in\mathcal{B}$(i.e., $T^{-1}A=A$), $\mu(A)=0$ or $\mu(X\setminus A)=0$)

\item[(6)] Two topological dynamical systems $(X,T)$ and $(Y,S)$ are called \emph{conjugate} if there exists a homeomorphism $\psi: X\rightarrow Y$ such that $\psi\circ T=S\circ\psi$.
%\begin{displaymath}
%\xymatrix{X \ar[r]^{T} \ar[d]_{\psi} &
%X \ar[d]^{\psi} \\
%Y \ar[r]^{S} & Y }
%\end{displaymath}
\end{enumerate}
\end{definition}

\begin{proposition}
\label{lem:mini}\cite{DP}
Let $n\in\N$ and $g:\Z/n\Z \rightarrow \Z/n\Z$ be a function. Then $g$ is minimal if and only if:
$$\begin{cases}
g^n (x)=x,\, \textrm{for every } x\in\Z/n\Z, \text{ and}\\
g^k (x)=x,\, \textrm{for every } x\in\Z/n\Z ~\Rightarrow ~k\equiv 0 ~({\rm mod}~n).
\end{cases}$$
\end{proposition}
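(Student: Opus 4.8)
The plan is to reduce the statement to a purely combinatorial fact about permutations of a finite set. Since $\Z/n\Z$ carries the discrete topology, every subset is closed, and an orbit is dense if and only if it is all of $\Z/n\Z$; consequently a self-map of $\Z/n\Z$ is minimal precisely when it is a cyclic permutation, i.e.\ a single $n$-cycle. So the first step is to record this reformulation, after which it remains to show that the two displayed conditions together are equivalent to ``$g$ is an $n$-cycle.''

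For the implication ``minimal $\Rightarrow$ the two conditions,'' I would enumerate $\Z/n\Z=\{x_0,x_1,\dots,x_{n-1}\}$ so that $g(x_i)=x_{i+1}$ with subscripts taken modulo $n$. Then $g^k(x_i)=x_{i+k}$, so $g^k$ is the identity exactly when $n\mid k$. This immediately gives $g^n(x)=x$ for all $x$, and it shows that if $g^k(x)=x$ for every $x$ (in particular for $x_0$) then $n\mid k$. Both parts are then routine.

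For the converse I would argue as follows. The first condition says $g^n=\mathrm{id}$, hence $g$ is invertible (its inverse is $g^{n-1}$), so $g$ is a permutation of $\Z/n\Z$; let $\ell_1,\dots,\ell_r$ be the lengths of its disjoint cycles, so $\ell_1+\dots+\ell_r=n$ and each $\ell_i\mid n$. For any $k$, $g^k=\mathrm{id}$ if and only if $\ell_i\mid k$ for all $i$, i.e.\ if and only if $\operatorname{lcm}(\ell_1,\dots,\ell_r)\mid k$; feeding $k=\operatorname{lcm}(\ell_1,\dots,\ell_r)$ into the second condition forces $n\mid\operatorname{lcm}(\ell_1,\dots,\ell_r)$.

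The step I expect to be the main obstacle is the last one: deducing from $n\mid\operatorname{lcm}(\ell_1,\dots,\ell_r)$ together with $\sum_i\ell_i=n$ that $r=1$, so that $g$ is a single $n$-cycle and hence minimal. This is where the arithmetic of the modulus enters: in the setting relevant to this paper $n$ is a power of the prime $p$, so each $\ell_i$ is a power of $p$, the least common multiple equals $\max_i\ell_i$, and $n\mid\max_i\ell_i$ with $\max_i\ell_i\le\sum_i\ell_i=n$ forces $\ell_j=n$ for some $j$, whence $r=1$. I expect everything except this final divisibility argument to be bookkeeping with cycle decompositions.
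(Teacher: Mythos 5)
Your argument is correct, but there is nothing in the paper to compare it against: Proposition~\ref{lem:mini} is imported from \cite{DP} with no proof given here. Your reduction of minimality on the finite discrete space $\Z/n\Z$ to ``$g$ is a single $n$-cycle,'' the forward direction via $g^k(x_i)=x_{i+k}$, and the cycle-decomposition analysis of the converse are all sound. More importantly, your instinct about the final step is exactly right and worth making explicit: under the natural reading of the second condition as ``$g^k=\mathrm{id}$ on all of $\Z/n\Z$ implies $n\mid k$,'' the deduction of $r=1$ from $n\mid\operatorname{lcm}(\ell_1,\dots,\ell_r)$ and $\sum_i\ell_i=n$ genuinely fails for $n$ that is not a prime power --- a permutation of $\Z/6\Z$ with cycle type $(3,2,1)$ satisfies both displayed conditions (its order is $6$) yet is not minimal --- so the proposition as literally stated for arbitrary $n\in\N$ is false under that reading. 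Your prime-power patch ($\ell_i$ all powers of $p$, so the lcm is the maximum, and $n\mid\max_i\ell_i\le\sum_i\ell_i=n$ forces one cycle) is precisely what is needed, and it covers every use of the statement in this paper, where only the moduli $5$ and $25$ ever occur (via Theorem~\ref{thm:minimal} and Proposition~\ref{prop:minimal}). Alternatively, if the second condition is read pointwise (``for every $x$ and every $k$, $g^k(x)=x$ implies $n\mid k$''), the statement does hold for all $n$ and your proof shortens: every point then has exact period $n$, so a single orbit already exhausts $\Z/n\Z$. Either way, your write-up is complete once you commit to one of these two readings and say so.
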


\begin{definition}
A \textit{cycle} for a map $h:\Z/p^n\Z\rightarrow \Z/p^n\Z$ is a finite sequence $(x_k )_{0\le k\le s-1}$ such that for every $i\in\{0,\cdots, s-2\}$, $h(x_i)=x_{i+1}$ and $h(x_{s-1})=x_0$. A cycle for $h$ is called a \textit{full-cycle} if the number of elements of $\{x_k ~|~0\le k \le s-1\}$ is $p^n$.
\end{definition}

\section{Minimality for p-adic polynomial maps}\label{sec:poly}
\subsection{Properties of minimality for p-adic polynomial maps}

Let $f$ be a polynomial of coefficients in $\Z_p$. We view the polynomial as a polynomial map $f:\Z_p \rightarrow \Z_p$. It is a simple fact that the polynomial map is an $1$-Lipschitz map. Denote by $f_{/n}$ the induced map of $f$ on $\Z/p^n\Z_p$, i.e.,
\[ f_{/n}( x\!\! \mod p^n)=f(x) \quad \mod p^n. \]
Many properties of the dynamics $f$ are linked to those of $f_{/n}$.

\begin{theorem}\label{thm:minimal}\cite{DP}
Let $f:\Z_p \rightarrow \Z_p$ be a polynomial map over $\Z_p$ and $f_{/n}:\Z/p^n\Z \rightarrow \Z/p^n\Z$ be the induced map of $f$. Then the following are equivalent:
\begin{enumerate}
	\item[(1)] $f$ is minimal in $\Z_p$,
	\item[(2)] $f_{/n}$ is minimal in $\Z/p^n\Z$ for all  $n\in \N$,
	\item[(3)] $f_{/n}$ has a full-cycle in $\Z/p^n\Z$ for all  $n\in \N$.
\end{enumerate}
\end{theorem}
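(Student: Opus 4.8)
The plan is to prove the chain of implications $(1)\Rightarrow(2)\Rightarrow(3)\Rightarrow(1)$, using the fact that $f$ is $1$-Lipschitz on $\Z_p$, so that the projections $\pi_n$ intertwine $f$ with $f_{/n}$, i.e. $\pi_n\circ f = f_{/n}\circ\pi_n$, and that $\Z_p$ is compact with the sets $\pi_n^{-1}(a)$ (for $a\in\Z/p^n\Z$) forming a basis of clopen neighborhoods. I would first record these elementary observations as the backbone of all three steps.

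For $(1)\Rightarrow(2)$: suppose $f$ is minimal on $\Z_p$, so some orbit $\{f^k(x)\}_{k\ge 0}$ is dense. Since $\pi_n$ is continuous and surjective and $\pi_n\circ f^k = f_{/n}^k\circ\pi_n$, the orbit of $\pi_n(x)$ under $f_{/n}$ is dense in the discrete finite space $\Z/p^n\Z$, hence equals all of $\Z/p^n\Z$. That makes $f_{/n}$ minimal (every point has a full orbit). For $(2)\Rightarrow(3)$: on a finite set $\Z/p^n\Z$, minimality of $f_{/n}$ means every orbit is the whole space; because the space is finite, forward iteration must eventually cycle, and the only way every orbit is dense is for $f_{/n}$ to be a single cyclic permutation of all $p^n$ points — that is precisely a full-cycle. (Here one may also invoke \Cref{lem:mini}.) The reverse direction $(3)\Rightarrow(2)$ on each level is the same observation read backwards.

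The substantive step is $(3)\Rightarrow(1)$, and this is where I expect the main work. Assume $f_{/n}$ has a full-cycle for every $n$; I want to show the $f$-orbit of, say, $0$ is dense in $\Z_p$, equivalently that for every $n$ and every $a\in\Z/p^n\Z$ there is $k$ with $\pi_n(f^k(0)) = a$. Fix $n$. By hypothesis $f_{/n}$ is a $p^n$-cycle, so $\{f_{/n}^k(\pi_n(0)) : 0\le k < p^n\} = \Z/p^n\Z$; since $f_{/n}^k(\pi_n(0)) = \pi_n(f^k(0))$, the first $p^n$ points of the $f$-orbit of $0$ already hit every residue mod $p^n$. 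As $n$ was arbitrary, the orbit closure meets every basic clopen set, so $\overline{\mathrm{Orb}_f(0)} = \Z_p$. The one technical point to handle carefully is the compatibility of the full-cycles across levels: a full-cycle at level $n+1$ projects to a full-cycle at level $n$ (because $\varphi_n\circ\pi_{n+1}=\pi_n$ and a surjection between cyclic orbits of sizes $p^{n+1}$ and $p^n$ respecting the dynamics is forced), so the orbit of $0$ is coherent from level to level and no choice of basepoint conflicts. I would also note that minimality of $f$ in the sense of \textbf{Definition 2.2(5)} is stated for homeomorphisms with two-sided orbits, so I should remark that for the $1$-Lipschitz polynomial $f$ the relevant density is of the forward orbit, and that when $f$ is minimal on each $\Z/p^n\Z$ it is in particular a bijection there, hence $f$ is a bijection on $\Z_p$ and the forward orbit already being dense gives density of the full orbit.
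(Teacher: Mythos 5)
The paper does not prove this theorem itself (it is quoted from \cite{DP}), and your argument is correct and essentially the standard one used there: project a dense orbit down to each finite level to get minimality there, identify minimality on a finite set with being a single full cycle, and conversely use that a full cycle at level $n$ forces the first $p^n$ iterates to exhaust $\Z/p^n\Z$, so orbits are dense in $\Z_p$. The only point worth tightening in $(3)\Rightarrow(1)$ is to say explicitly that the density argument applies verbatim to \emph{every} basepoint, not just $0$ (minimality requires all orbits dense, and a single dense orbit would only give transitivity); your argument already yields this, since a full cycle at level $n$ makes the forward orbit of every residue, hence of $\pi_n(x)$ for arbitrary $x\in\Z_p$, all of $\Z/p^n\Z$.
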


\begin{proposition}
\label{lem:minimal}\cite{DP}
Let $f:\Z_p \rightarrow \Z_p$ be a polynomial map over $\Z_p$ and $n\ge 1$. Suppose that $(\Z/p^n\Z,f_{/n})$ is minimal. The following are equivalent:
\begin{enumerate}
\item[(1)] $(\Z/p^{n+1}\Z,f_{/n+1})$ is minimal.
\item[(2)] For all $x\in\Z_p$, we have that $f^{p^n}(x)-x\notin p^{n+1}\Z_p$ and $(f^{p^n})'(x)\in 1+p\Z_p$.
\item[(3)] There exists $x\in\Z_p$ such that $f^{p^n}(x)-x\notin p^{n+1}\Z_p$ and $(f^{p^n})'(x)\in 1+p\Z_p$.
\end{enumerate}
\end{proposition}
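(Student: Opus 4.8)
The plan is to prove \Cref{lem:minimal} by a careful ``lifting'' analysis, tracking the cycle structure of $f_{/n}$ as we pass to $f_{/n+1}$. Since $(\Z/p^n\Z, f_{/n})$ is minimal, \Cref{lem:mini} tells us $f_{/n}$ is a full-cycle of length $p^n$; concretely, for any $x\in\Z_p$ the $p^n$ points $x, f(x), \dots, f^{p^n-1}(x)$ are pairwise distinct modulo $p^n$, so they hit every residue class. Passing to $\Z/p^{n+1}\Z$, the fiber over each residue class mod $p^n$ has $p$ elements, so the orbit of $x$ under $f_{/n+1}$ has length a multiple of $p^n$, namely $p^n$ or $p^{n+1}$, and it is $p^{n+1}$ exactly when the map $x \mapsto f^{p^n}(x)$, restricted to the fiber $\{x + t p^n : t \in \Z/p\Z\}$ over $\bar x$, acts as a nontrivial cyclic permutation of that fiber. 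So the whole problem reduces to understanding the single map $F := f^{p^n}$ on one fiber modulo $p^{n+1}$.

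The first key step is to write down this fiber map explicitly. Because $f$ is a polynomial over $\Z_p$, so is $F = f^{p^n}$, and minimality of $f_{/n}$ forces $F(x) \equiv x \pmod{p^n}$ for all $x$ (each point returns to itself after a full cycle of length $p^n$). Write $F(x) = x + p^n g(x)$ for a suitable $\Z_p$-valued function $g$ (really $F(x)-x$ is a polynomial divisible by $p^n$ on all of $\Z_p$, which needs a small argument, or one just works with the values). Then for $t\in\Z/p\Z$,
\[
F(x + tp^n) \equiv x + tp^n + p^n g(x + tp^n) \equiv x + tp^n + p^n g(x) \pmod{p^{n+1}},
\]
using that $g$ is $1$-Lipschitz so $g(x+tp^n)\equiv g(x)\pmod p$; one should also expand via Taylor to see $F(x+tp^n) \equiv F(x) + tp^n F'(x) \pmod{p^{n+1}}$. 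Thus on the fiber, in the coordinate $t$, the map is the affine map $t \mapsto F'(x)\, t + c$ on $\Z/p\Z$, where $p^n c \equiv F(x) - x \pmod{p^{n+1}}$. The second key step is the elementary fact (a special case of the affine-map criterion for minimality on $\Z/p\Z$, or a direct computation) that an affine map $t\mapsto at+c$ on $\Z/p\Z$ is a full cycle (a $p$-cycle) if and only if $a \equiv 1 \pmod p$ and $c \not\equiv 0 \pmod p$. Translating back: the orbit of $x$ has length $p^{n+1}$ iff $F'(x) \in 1 + p\Z_p$ and $F(x) - x \notin p^{n+1}\Z_p$, which is condition (2) at the point $x$, and (3) is the same condition at a single $x$.

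Assembling the equivalence: (1) holds iff $f_{/n+1}$ has a full-cycle (by \Cref{thm:minimal}, or directly), iff \emph{some} orbit has length $p^{n+1}$, which by the fiber analysis is condition (3). For (2), one notes that the two quantities $F'(x) \bmod p$ and $(F(x)-x)/p^n \bmod p$ are \emph{constant} as $x$ ranges over $\Z_p$: any two $x$'s lie in a common $f_{/n}$-orbit, so $F(y) - y$ and $F(x)-x$ differ by a telescoping sum that is controlled, and $F'$ is multiplicative along the orbit via the chain rule, giving $(f^{p^n})'(f(x)) \cdot f'(x) = \dots$; more cleanly, if $y = f^k(x)$ then $F(y) - y = f^k(F(x)) - f^k(x)$, and since $f^k$ is $1$-Lipschitz with $(f^k)'(x)$ a unit-or-not controlling the leading behavior, $F(y)-y \equiv (f^k)'(x)\,(F(x)-x) \pmod{p^{n+1}}$ and $F'(y) = F'(x)$ up to the same unit factors — so the truth of the condition at one point propagates to all. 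Hence (3) $\Rightarrow$ (2), and (2) $\Rightarrow$ (3) is trivial. I expect the main obstacle to be the bookkeeping in this last propagation step: showing rigorously that ``$f^{p^n}(x) - x \notin p^{n+1}\Z_p$'' and ``$(f^{p^n})'(x)\in 1+p\Z_p$'' are genuinely independent of $x$, which requires combining the chain rule for $(f^k)'$ with the $1$-Lipschitz estimates and the fact that along a full cycle the relevant derivatives are $p$-adic units (a consequence of minimality of $f_{/1}$, hence $f_{/n}$). The affine-on-$\Z/p\Z$ computation and the Taylor expansion mod $p^{n+1}$ are routine by comparison.
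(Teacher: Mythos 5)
The paper does not prove this proposition; it is quoted from Durand--Paccaut \cite{DP}, so there is no in-paper proof to compare against. Your argument is the standard one (and essentially the one in \cite{DP}): use minimality of $f_{/n}$ to see that $F=f^{p^n}$ preserves each fiber $\{x+tp^n\}$ of $\Z/p^{n+1}\Z\to\Z/p^n\Z$, identify the induced map on a fiber with an affine map $t\mapsto F'(x)t+c$ on $\Z/p\Z$ via Taylor expansion (the higher Hasse-derivative terms contribute $O(p^{2n})\subseteq p^{n+1}\Z_p$), and invoke the criterion that $t\mapsto at+c$ is a $p$-cycle iff $a\equiv1$ and $c\not\equiv0\pmod p$. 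This is correct and yields $(1)\Leftrightarrow(3)$ cleanly.

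Two points need repair. First, your preliminary computation via ``$g=(F-\mathrm{id})/p^n$ is $1$-Lipschitz'' is wrong and actually contradicts your Taylor expansion: $F-\mathrm{id}$ is only \emph{pointwise} divisible by $p^n$, not divisible as a polynomial, so $g$ need not be $1$-Lipschitz; taken at face value that computation gives the fiber map $t\mapsto t+g(x)$, i.e.\ $a\equiv1$ automatically, which would render the derivative condition in (2)--(3) vacuous and the proposition false. You must discard that line and keep only the Taylor-expansion derivation $F(x+tp^n)\equiv F(x)+tp^nF'(x)\pmod{p^{n+1}}$, which gives the correct $a=F'(x)\bmod p$. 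Second, your propagation argument for $(3)\Rightarrow(2)$ is workable but unnecessarily delicate (and the parenthetical that the unit-ness of the derivatives follows from minimality of $f_{/1}$ is false --- e.g.\ $f(x)=x^p+1$ is minimal mod $p$ with $f'\equiv0$; the units come instead from the hypothesis $F'(x)\in1+p\Z_p$ via the chain-rule factorization $F'(x)=\prod_j f'(f^j(x))$). The cleaner route is $(3)\Rightarrow(1)\Rightarrow(2)$: once $(3)$ gives one full orbit, $f_{/n+1}$ is a full cycle, hence \emph{every} orbit has length $p^{n+1}$, and applying your fiber analysis at each point (noting that both conditions in (2) depend only on $x\bmod p^{n+1}$ because $F$ and $F'$ are $1$-Lipschitz polynomials) yields (2) with no telescoping needed.
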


\begin{proposition}\label{prop:minimal}\cite{DP}
Let $f:\Z_p\rightarrow \Z_p$ be a polynomial. Then $(\Z_p,f)$ is minimal if and only if $(\Z/p^{\delta}\Z,f_{/\delta})$ is minimal, where $\delta=3 \textrm{ if } p \textrm{ is } 2 \textrm{ or } 3 \textrm{ and } \delta=2 \textrm{ if } p\geq 5.$
\end{proposition}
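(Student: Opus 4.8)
The plan is to read off the easy implication from Theorem~\ref{thm:minimal} and to obtain the converse by an induction on the level, with Proposition~\ref{lem:minimal} supplying the inductive step. If $(\Z_p,f)$ is minimal then every $f_{/n}$ is minimal by Theorem~\ref{thm:minimal}, in particular $f_{/\delta}$ is. Conversely, suppose $(\Z/p^{\delta}\Z,f_{/\delta})$ is minimal; by Theorem~\ref{thm:minimal} it suffices to show that $f_{/n}$ is minimal for every $n\in\N$. I would first record the descent that $f_{/n+1}$ minimal implies $f_{/n}$ minimal: reducing a full cycle of $f_{/n+1}$ modulo $p^{n}$ gives a periodic sequence that still meets every element of $\Z/p^{n}\Z$, so the period $\ell$ of the orbit of the projected point satisfies $\ell\ge p^{n}$, hence $\ell=p^{n}$. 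This gives minimality at all levels $m\le\delta$, and it then remains to run the induction $f_{/n}$ minimal $\Rightarrow$ $f_{/n+1}$ minimal for $n\ge\delta$, verifying the two conditions of Proposition~\ref{lem:minimal}(3).

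The derivative condition is uniform in $n$. Since $f_{/1}$ is minimal, the residues $f^{k}(x)\bmod p$ for $0\le k<p^{n}$ run through $\Z/p\Z$ exactly $p^{n-1}$ times with period $p$, so the chain rule gives $(f^{p^{n}})'(x)\equiv\big(\prod_{j\in\Z/p\Z}f'(j)\big)^{p^{n-1}}\pmod p$. Since $\delta\ge2$ the hypothesis forces $f_{/2}$ minimal, and Proposition~\ref{lem:minimal}(2) at level $1$ gives $\prod_{j\in\Z/p\Z}f'(j)\equiv(f^{p})'(x)\equiv1\pmod p$. Hence $(f^{p^{n}})'(x)\in1+p\Z_p$ for every $x$ and every $n$.

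For the non-periodicity condition I set $c_{m}(x):=\big(f^{p^{m}}(x)-x\big)/p^{m}$, reduced modulo $p$, which is well defined once $f_{/m}$ is minimal. The heart of the proof is the claim that, for $p$ odd and $m\ge2$ --- and also for $m=1$ when $p\ge5$ --- minimality of $f_{/m}$ forces $p^{m+1}\mid f^{p^{m+1}}(x)-x$ and $c_{m+1}(x)\equiv c_{m}(x)\pmod p$ for all $x$. To prove it, write $g=f^{p^{m}}$, so that $g(x)=x+p^{m}u(x)$ with $u(x)$ reducing to $c_{m}(x)$ modulo $p$ and $g'(x)\in1+p\Z_p$, and expand
\[
f^{p^{m+1}}(x)-x=g^{p}(x)-x=p^{m}\sum_{k=0}^{p-1}u\big(g^{k}(x)\big)
\]
by Taylor-expanding $g$ around $x$, using $g^{k}(x)=x+p^{m}S_{k}$ with $S_{k}:=\sum_{j<k}u(g^{j}(x))\equiv k\,u(x)\pmod p$. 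The contributions beyond $p\,u(x)$ are controlled by the $p$-adic valuations of the divided derivatives $g^{(j)}(x)/j!$ weighted against the powers $p^{m(j-1)}$; modulo $p^{m+2}$ only the power sums $\sum_{k=0}^{p-1}k$ and $\sum_{k=0}^{p-1}k^{2}$ survive, and both vanish modulo $p$ exactly in the stated ranges of $m$ and $p$. This is where the value of $\delta$ is forced, since $\sum_{k=0}^{p-1}k^{2}\not\equiv0\pmod3$. Granting the claim, Proposition~\ref{lem:minimal} at level $\delta-1$ gives $c_{\delta-1}(x)\ne0$ for all $x$ --- here the full hypothesis $f_{/\delta}$ minimal is used --- and the claim propagates $c_{m}(x)\ne0$ to all $m\ge\delta-1$. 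Together with the derivative condition this makes $f_{/m+1}$ minimal for every $m\ge\delta-1$, which with the levels $m\le\delta$ already handled gives minimality of $f_{/n}$ for all $n$, hence of $(\Z_p,f)$ by Theorem~\ref{thm:minimal}.

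I expect the Taylor estimate inside the claim to be the main obstacle: one must bound $\nu_p\big(g^{(j)}(x)/j!\big)+m(j-1)$ below uniformly in $j$ while keeping track of which identities $\sum_{k<p}k^{j}\equiv0\pmod p$ are genuinely needed, and the margins are tightest precisely when $m$ is small. The prime $p=2$ lies outside this scheme --- already the linear term $S_{k}\big(g'(x)-1\big)$ is divisible only by $2$, not by $4$, at each level --- so there one needs a sharper version of the same expansion (or the $p=2$ results of \cite{L} and \cite{J}) to get $\delta=3$, and $p=3$ keeps $\delta=3$ for the reason just noted.
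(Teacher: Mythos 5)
The paper itself gives no proof of this proposition --- it is quoted from \cite{DP} --- so the comparison can only be against the statement and against the source's method, which your argument essentially reproduces: descent of minimality under the projections, the uniform derivative congruence $(f^{p^{n}})'(x)\equiv\bigl(\prod_{j\in\Z/p\Z}f'(j)\bigr)^{p^{n-1}}\equiv1\pmod p$, and the propagation $c_{m+1}\equiv c_{m}$ obtained by Taylor-expanding $g=f^{p^{m}}$. For odd $p$ your bookkeeping is correct: the $j$-th Taylor term contributes $p^{jm}\bigl(g^{(j)}(x)/j!\bigr)\sum_{k}S_{k}^{j}$ with the divided derivatives in $\Z_p$; the $j=1$ term gains one factor of $p$ from $g'(x)-1\in p\Z_p$ and another from $\sum_{k<p}k\equiv0\pmod p$; the $j=2$ term needs either $m\ge2$ or $\sum_{k<p}k^{2}\equiv0\pmod p$ (hence $p\ge5$); all $j\ge3$ terms die for valuation reasons since $m(j-1)\ge2$. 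Combined with Proposition~\ref{lem:minimal} at level $\delta-1$ this runs the induction and correctly locates why $\delta=2$ works for $p\ge5$ but not for $p=3$. For the paper's purposes ($p=5$) your proof is complete.

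The one genuine gap is the case $p=2$, which the statement covers with $\delta=3$ and which you explicitly leave to ``a sharper version of the same expansion'' or to \cite{L} and \cite{J}. Concretely, for $p=2$ the $j=1$ contribution is $2^{m}(g'(x)-1)\sum_{k}S_{k}=2^{m}(g'(x)-1)u(x)$, which is only guaranteed divisible by $2^{m+1}$, so the recursion becomes $c_{m+1}(x)\equiv c_{m}(x)\bigl(1+\tfrac{1}{2}(g'(x)-1)\bigr)\pmod 2$; to propagate $c_{m}\ne0$ one must additionally prove $(f^{2^{m}})'(x)\equiv1\pmod 4$, whereas your derivative argument only delivers the congruence modulo $2$. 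Establishing that mod-$4$ refinement (or an equivalent correction term) is a real piece of work that the proposal does not supply, so as written the proof covers odd primes only. A minor presentational point: the claim $c_{m+1}\equiv c_{m}$ uses $g'(x)\in1+p\Z_p$ as an input, so the uniform derivative paragraph should be invoked explicitly inside the claim rather than stated in parallel with it.
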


\begin{remark}\label{remark:minimal}
Let $f(x)=a_0 + a_1 x + a_2 x^2 + \cdots + a_d x^d$ be a polynomial over $\Z_p$, hence $f(0)=a_0$. If $f$ is minimal, then $a_0\not\equiv 0~(mod~p)$; otherwise, $0$ is a fixed point modulo $p$ for $f$. Define a polynomial map $\psi : \Z_p \rightarrow \Z_p$ by $\psi (x) = (1/a_0) x$, and $g(x) = (1/a_0)f(a_0 x)= 1+a_1 x + a_2 a_0 x^2 + a_3 a_0^2 x^3 + \cdots + a_d a_0^{d-1} x^d$. Then $(\Z_p,f)$ and $(\Z_p,g)$ are conjugate since $\psi \circ f = g \circ \psi$.
Therefore, $f$ is minimal if and only if $g$ is minimal and moreover $g(0)=1$.  Hence we will restrict to polynomials $f$ with $f(0)=1$ without loss of generality.
\end{remark}

\subsection{Minimality for p-adic polynomial maps for $p=2$ and $3$}

We review the minimality of a polynomial $f$ over $\Z_p$ in the dynamical system $(\Z_{p},f)$ which is treated for the cases of $p=2$ and $p=3$ in Durand and Paccaut's paper  \cite{DP}.

Let $f(x)=a_d x^d + a_{d-1}x^{d-1}+\cdots+a_1 x+1$ be a polynomial over $\Z_p$ for $p=2$ or $3$.  We define the following terms derived from $f$ which are used to explain the minimality conditions of $f$.
\begin{align*}
	& A_1 =\sum_{i\in 1+2\N}a_i,
    & A_2 =\sum_{i\in 2\N}a_i,&
	& D_1=\sum_{i\in \N}ia_i, &
    & D_{-1}=\sum_{i\in\N} ia_i (-1)^{i-1}.
\end{align*}

From the derivative of $f(x)$, $f'(x)=d a_d x^{d-1} + (d-1)a_{d-1}x^{d-2}+\cdots+2 a_2 x+ a_1$, we can check that $D_1=f'(1)$ and $D_{-1}=f'(-1)$.

\begin{theorem}[\cite{DP}]\label{thm:2minimal}
The dynamical system $(\Z_2,f)$ is minimal if and only if
\begin{eqnarray*}
a_1 &\equiv&1~({\rm mod}~2),\\
A_1 &\equiv& 1 ~({\rm mod}~2),\\
A_1+A_2&\equiv&1~({\rm mod}~4),\\
2a_2+a_1A_1&\equiv& 1~({\rm mod}~4).
\end{eqnarray*}
\end{theorem}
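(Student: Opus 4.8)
The plan is to reduce minimality on $\Z_2$ to a finite computation modulo $8$ and then peel off one $2$-adic digit at a time. By Proposition~\ref{prop:minimal} (the case $p=2$, where $\delta=3$), $(\Z_2,f)$ is minimal if and only if $f_{/3}$ is minimal on $\Z/8\Z$. Since minimality modulo $2^{n+1}$ forces minimality modulo $2^{n}$ (a full cycle projects onto a full cycle), applying Proposition~\ref{lem:minimal} twice shows that $f_{/3}$ is minimal exactly when three things hold simultaneously: ($\mathrm{i}$) $f_{/1}$ is minimal on $\Z/2\Z$; ($\mathrm{ii}$) the level-$1$ lifting condition of Proposition~\ref{lem:minimal} holds; ($\mathrm{iii}$) the level-$2$ lifting condition holds. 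Because $f(0)=1$ and Proposition~\ref{lem:minimal} lets us test the lifting condition at a single point of our choosing, I will evaluate everything at $x=0$, tracking $f^{2}(0)$ and $f^{4}(0)$ to the required $2$-adic precision.

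For ($\mathrm{i}$): since $f(0)=1$, the map $f_{/1}$ is a full cycle on $\{0,1\}$ iff $f(1)\equiv 0\pmod 2$, and $f(1)=1+\sum_{i\ge 1}a_i=1+A_1+A_2$, so this says $A_1+A_2\equiv 1\pmod 2$. For ($\mathrm{ii}$) I need $f^{2}(0)-0\notin 4\Z_2$ and $(f^{2})'(0)\in 1+2\Z_2$. Now $f^{2}(0)=f(1)=1+A_1+A_2$, which is even by ($\mathrm{i}$), so the first requirement sharpens to $1+A_1+A_2\equiv 2\pmod 4$, i.e.\ $A_1+A_2\equiv 1\pmod 4$. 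For the second, the chain rule gives $(f^{2})'(0)=f'(1)f'(0)=D_1a_1$, and reducing modulo $2$ (using $D_1\equiv A_1\pmod 2$ and $f'(0)=a_1$) turns this into $a_1A_1\equiv 1\pmod 2$, equivalently $a_1\equiv 1$ and $A_1\equiv 1\pmod 2$. Thus ($\mathrm{i}$) and ($\mathrm{ii}$) already produce the first three displayed congruences.

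For ($\mathrm{iii}$) I need $f^{4}(0)\notin 8\Z_2$ and $(f^{4})'(0)\in 1+2\Z_2$. The derivative part is automatic: modulo $2$ the orbit of $0$ is $0,1,0,1$, so $(f^{4})'(0)\equiv (f'(1)f'(0))^{2}\equiv(a_1A_1)^{2}\equiv 1\pmod 2$ by the conditions already in force. For $f^{4}(0)$ I compute the orbit of $0$ modulo $8$. Writing $c:=f^{2}(0)=1+A_1+A_2$, the earlier conditions force $c\equiv 2\pmod 4$, hence $c^{2}\equiv 4\pmod 8$ and $c^{i}\equiv 0\pmod 8$ for $i\ge 3$, so $f^{3}(0)\equiv 1+a_1c+4a_2\pmod 8$; this value, say $e$, is odd, so $e^{2}\equiv 1\pmod 8$ and $f^{4}(0)\equiv 1+eA_1+A_2\pmod 8$. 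Substituting $e=1+a_1c+4a_2$ and $c=1+A_1+A_2$ and using that $A_1$ and $a_1A_1$ are odd, this collapses to $f^{4}(0)\equiv 4\bigl((1+a_1A_1)/2+a_2\bigr)\pmod 8$. Hence $f^{4}(0)\notin 8\Z_2$ precisely when $(1+a_1A_1)/2+a_2$ is odd, which rearranges to $2a_2+a_1A_1\equiv 1\pmod 4$ — the fourth displayed congruence.

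Assembling ($\mathrm{i}$)–($\mathrm{iii}$) gives exactly the four stated conditions (the mod-$4$ and mod-$2$ congruences absorb the weaker mod-$2$ one coming from ($\mathrm{i}$)). I expect the only delicate step to be the mod-$8$ orbit computation in ($\mathrm{iii}$): one has to carry enough $2$-adic precision through four applications of $f$, and the computation only stays finite because $x^{2}\equiv 1\pmod 8$ for odd $x$ and powers $c^{i}$ with $c\equiv 2\pmod 4$ vanish modulo $8$ for $i\ge 3$. Everything else reduces to a one-line check.
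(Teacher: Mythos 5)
Your proposal is correct, and I verified the key computations: $f^2(0)=1+A_1+A_2$, $f^3(0)\equiv 1+a_1c+4a_2\pmod 8$, and $f^4(0)\equiv c(1+a_1A_1)+4a_2A_1\equiv 4\bigl((1+a_1A_1)/2+a_2\bigr)\pmod 8$ all check out (e.g.\ for $f(x)=4x^2+x+1$ the orbit mod $8$ is $0,1,6,7,4,5,2,3,0$, matching your formulas). Note that the paper itself gives no proof of this theorem --- it is quoted from Durand--Paccaut --- so there is nothing to compare line by line; but your route (reduce to $\Z/8\Z$ via Proposition~\ref{prop:minimal}, then apply the lifting criterion of Proposition~\ref{lem:minimal} twice at $x=0$, tracking the orbit of $0$ to increasing $2$-adic precision) is exactly the strategy the paper deploys for $p=5$ in Corollary~\ref{cor:minimal_Z_5f} and Theorem~\ref{thm:minimal5}, with the one extra lifting step forced by $\delta=3$ when $p=2$.
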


\begin{theorem}[\cite{DP}]\label{thm:3minimal}
The dynamical system $(\Z_3,f)$ is minimal if and only if $f$ fulfils one of the following four cases:
\begin{center}
\begin{tabular}
%{|l|l l l l l l |l|}
{|m{1.2cm} | m{0.4cm} m{0.4cm} m{0.4cm} m{0.4cm} m{0.05cm} m{0.5cm}:m{5.0cm}|}
\hline
& $A_1$ & $A_2$ & $D_1$ & $D_{-1}$ & $a_1$ & $(3)$ & \\ \hline
case I & $1$ & $0$ & $2$ & $2$ & $1$ & & $A_1+5\not\equiv0 ~ (9)$, \par $A_1+5\not\equiv 3a_2 +3\sum_{j\ge0} a_{5+6j}~ (9)$\\ \hline
case II & $1$ & $0$ & $1$ & $1$ & $1$ & & $A_2+6\not\equiv0 ~ (9)$, \par $A_0+6\not\equiv 6a_2 +3\sum_{j\ge0} a_{2+6j}~ (9)$\\ \hline
case III & $1$ & $0$ & $1$ & $2$ & $2$ & & $A_1+5\not\equiv0 ~ (9)$, \par $A_1+5\not\equiv 6a_2 +3\sum_{j\ge0} a_{5+6j}~ (9)$ \\ \hline
case IV & $1$ & $0$ & $2$ & $1$ & $2$ & & $A_2+6\not\equiv0 ~ (9)$, \par $A_0+6\not\equiv 3a_2 +3\sum_{j\ge0} a_{2+6j}~ (9)$
 \\ \hline
\end{tabular}
\captionof{table}{Minimality conditions for $(\Z_3,f)$}
\end{center}
\end{theorem}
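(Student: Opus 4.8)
The plan is to reduce minimality of $(\Z_3,f)$ to a check on the three finite levels $\Z/3\Z$, $\Z/9\Z$, $\Z/27\Z$, and then to run that check with the lifting criterion of Proposition~\ref{lem:minimal}. By Remark~\ref{remark:minimal} I may assume $a_0=1$. Proposition~\ref{prop:minimal} (with $p=3$, $\delta=3$) says $(\Z_3,f)$ is minimal iff $(\Z/27\Z,f_{/3})$ is minimal, and by Theorem~\ref{thm:minimal} this is equivalent to $f_{/1}$, $f_{/2}$ and $f_{/3}$ all being minimal. So the whole theorem amounts to translating these three conditions into the rows of the table, each obtained from the previous one.

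First, $f_{/1}$: since $f(0)\equiv1\pmod3$, it is a full $3$-cycle iff $0\mapsto1\mapsto2\mapsto0$, i.e.\ $f(1)\equiv2$ and $f(2)\equiv0\pmod3$; writing $f(1)=1+A_1+A_2$ and $f(2)\equiv f(-1)=1-A_1+A_2\pmod3$ (using that $f$ is $1$-Lipschitz), these become $A_1\equiv1$, $A_2\equiv0\pmod3$, the first two columns, common to all four cases. Next, assuming $f_{/1}$ minimal, Proposition~\ref{lem:minimal} with $n=1$ (its conditions (2) and (3) being equivalent, I test $x=0$) says $f_{/2}$ is minimal iff $(f^3)'(0)\in1+3\Z_3$ and $f^3(0)\notin9\Z_3$. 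As $\{0,f(0),f^2(0)\}\equiv\{0,1,2\}\pmod3$, the chain rule gives $(f^3)'(0)\equiv f'(0)f'(1)f'(2)\equiv a_1D_1D_{-1}\pmod3$ (using $D_1=f'(1)$, $D_{-1}=f'(-1)$, $f'(2)\equiv f'(-1)\pmod3$), so the derivative condition is exactly $a_1D_1D_{-1}\equiv1\pmod3$; the triples in $\{1,2\}^3$ of product $\equiv1\pmod3$ are precisely the four patterns of $(a_1,D_1,D_{-1})$ labelling cases I--IV, and $f^3(0)\notin9\Z_3$, after expanding $f^3(0)\bmod9$, is the first mod-$9$ inequality of each row.

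Finally, assuming $f_{/2}$ minimal, Proposition~\ref{lem:minimal} with $n=2$ (again at $x=0$) says $f_{/3}$ is minimal iff $(f^9)'(0)\in1+3\Z_3$ and $f^9(0)\notin27\Z_3$. Since $f_{/2}$ is a full cycle, its orbit $f^0(0),\dots,f^8(0)$ runs over all residues mod $9$, hence over $0,1,2$ three times each mod $3$, so $(f^9)'(0)\equiv(a_1D_1D_{-1})^3\equiv a_1D_1D_{-1}\equiv1\pmod3$ automatically from the previous level; only $f^9(0)\notin27\Z_3$ remains, and expanding $f^9(0)\bmod27$ gives the second mod-$9$ inequality of each row. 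Since each of the three levels was derived as an equivalence, assembling them gives exactly the four cases of the table, proving both directions.

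The hard part will be these two expansions: computing $f^3(0)\bmod9$ and especially $f^9(0)\bmod27$ as explicit linear forms in the $a_i$ and massaging them into the stated conditions. The appearance of sums like $\sum_{j\ge0}a_{5+6j}$ and $\sum_{j\ge0}a_{2+6j}$ reflects that a unit $u\in(\Z/9\Z)^\times$ has $u^6\equiv1$, so in $f(u)=1+\sum a_iu^i$ only $i\bmod6$ matters; one must propagate this through the nested composition along the case-dependent orbit of $0$ while tracking carries modulo $27$. Everything outside this computation is the mechanical application of the results of Section~3.1.
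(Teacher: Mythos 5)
This theorem is quoted from \cite{DP}; the paper itself offers no proof of it, so the only internal benchmark is the authors' own treatment of $p=5$ in Section~4, and your skeleton is precisely that method extended by the one extra level that $\delta=3$ forces. The reductions you make are all sound: Remark~\ref{remark:minimal} to normalize $a_0=1$, Proposition~\ref{prop:minimal} to cut off at $\Z/27\Z$, Theorem~\ref{thm:minimal} to stratify into the three levels, and Proposition~\ref{lem:minimal} tested at $x=0$ at each lift. Level~1 correctly yields $A_1\equiv1$, $A_2\equiv0\pmod 3$; level~2 correctly yields $a_1D_1D_{-1}\equiv1\pmod 3$, whose solutions in $\{1,2\}^3$ are exactly the four rows; and your observation that $(f^9)'(0)\equiv(a_1D_1D_{-1})^3\equiv a_1D_1D_{-1}\pmod 3$ makes the derivative condition at level~3 automatic is correct and is the one structural point with no analogue in the paper's $p=5$ argument.

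The genuine gap is that you stop exactly where the content of the last column of the table begins. The expansions of $f^3(0)\bmod 9$ and $f^9(0)\bmod 27$ along the case-dependent orbit are not a routine afterthought: they are the only possible source of the specific quantities $A_1+5$, $A_2+6$, $3a_2+3\sum_{j\ge0}a_{5+6j}$, and so on, and your proposal neither derives them nor even verifies that the first displayed inequality in each row really is the $f^3(0)$ condition and the second really is the $f^9(0)$ condition --- that pairing is asserted, not checked. (In the paper's $p=5$ proof the analogous expansion of $f^5(0)\bmod 25$ occupies essentially the entire argument of Theorem~\ref{thm:minimal5}.) Carrying the computation out is also the only way to confront the table as printed, which contains an undefined symbol $A_0$ in cases II and IV. As it stands the proposal establishes the first five columns of the table and is a correct plan for the rest, but not yet a proof of the stated congruence conditions.
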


%%%%%%%%%%%%%%%%%%%%%%%%%%%%%%%%%%%%%%%%%%%%%%%%%%%%%%%%%%%%%%%%%

\section{Minimality for 5-adic polynomial maps}

The minimality of a polynomial $f$ over $\Z_p$ in the dynamical system $(\Z_{p},f)$ has treated for the cases of $p=2$ and $p=3$ in Durand and Paccaut's paper \cite{DP}. We extended the result to the case of $p=5$.

From Proposition \ref{prop:minimal}, for the case of $p \geq 5$, $(\Z_{p},f)$ is minimal if and only if $(\Z/p^2\Z, f_{/2})$ is minimal. Proposition \ref{lem:minimal} explains that when $(\Z/p\Z, f_{/1})$ is minimal, how we obtain the minimality condition of $(\Z/p^2\Z, f_{/2})$.
We combine these results as a corollary.

\begin{corollary}\label{cor:minimal_Z_5f}
  The dynamical system $(\Z_5, f)$ is minimal if and only if $f$ satisfies the following:
  \begin{enumerate}
    \item[(1)] $f_{/1}$ has a full cycle in $\Z/{5\Z}$,
	\item[(2)] $(f^5)'(0) \equiv 1 ~({\rm mod}~5)$ and
	\item[(3)] $(f^5)(0) \not\equiv 0 ~({\rm mod}~25)$.
  \end{enumerate}
\end{corollary}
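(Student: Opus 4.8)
The plan is to assemble three results already in hand---Proposition~\ref{prop:minimal}, Proposition~\ref{lem:minimal}, and Theorem~\ref{thm:minimal}---in the special case $p=5$, and to observe that conditions (2) and (3) of the corollary are exactly the $n=1$, $x=0$ instance of the criterion of Proposition~\ref{lem:minimal}. No new $p$-adic estimate is needed: all the analytic content has been packaged into the cited propositions, so the proof is a matter of careful bookkeeping.

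First I would use Proposition~\ref{prop:minimal} with $p=5$, so that $\delta=2$ and $(\Z_5,f)$ is minimal if and only if $(\Z/25\Z,f_{/2})$ is minimal. Next I would split the minimality of $f_{/2}$ into the minimality of $f_{/1}$ and an inductive step. Note that for the finite ring $\Z/5\Z$ a self-map is minimal precisely when it is a single $5$-cycle, i.e. has a full-cycle (this is built into Theorem~\ref{thm:minimal} and Proposition~\ref{lem:mini}), so condition (1) of the corollary is just ``$(\Z/5\Z,f_{/1})$ is minimal''. For the ``only if'' direction, minimality of $(\Z_5,f)$ forces $f_{/n}$ to be minimal for every $n$ by Theorem~\ref{thm:minimal}, in particular $f_{/1}$ is minimal, giving condition (1); for the ``if'' direction, condition (1) gives the hypothesis $(\Z/5\Z,f_{/1})$ minimal that is needed below.

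With $(\Z/5\Z,f_{/1})$ minimal available on both sides, I would apply Proposition~\ref{lem:minimal} with $n=1$. Its item (1), the minimality of $(\Z/25\Z,f_{/2})$, is equivalent to item (2) (``for all $x$'') and to item (3) (``for some $x$''). In the ``only if'' direction I take item (2) and specialize to $x=0$, obtaining $f^{5}(0)\notin 25\Z_5$ and $(f^{5})'(0)\in 1+5\Z_5$, that is, conditions (3) and (2) of the corollary. In the ``if'' direction I feed $x=0$ into item (3): conditions (2) and (3) of the corollary say exactly that $x=0$ is a witness, so $(\Z/25\Z,f_{/2})$ is minimal, and hence $(\Z_5,f)$ is minimal by the first step.

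The only points that require a word of care---and the closest thing here to an ``obstacle''---are (i) the equivalence of ``$f_{/1}$ has a full-cycle'' with ``$f_{/1}$ is minimal'' on the finite set $\Z/5\Z$, which is elementary (a minimal transformation of a finite set is a full cycle); and (ii) the passage between the ``for all $x$''/``for some $x$'' forms in Proposition~\ref{lem:minimal} and the single point $x=0$, which is harmless and is precisely why the two implications invoke items (2) and (3) of that proposition respectively. I would also remark that, although Remark~\ref{remark:minimal} normalizes to $f(0)=1$, this normalization is not used in the present argument, so the corollary holds for an arbitrary polynomial $f$ over $\Z_5$.
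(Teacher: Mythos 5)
Your proposal is correct and takes essentially the same route as the paper, which derives the corollary by combining Proposition~\ref{prop:minimal} (giving $\delta=2$ for $p=5$), Theorem~\ref{thm:minimal} (full-cycle of $f_{/1}$), and Proposition~\ref{lem:minimal} at $n=1$ specialized to $x=0$. Your explicit handling of the ``for all $x$'' versus ``there exists $x$'' forms in the two directions is exactly the intended bookkeeping.
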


%\subsection{Minimality of polynomials for $p = 5$}

Let $f(x)=a_d x^d + a_{d-1}x^{d-1}+\cdots+a_1 x+1$ be a polynomial over $\Z_5$ of degree $d$.  We define the following terms derived from $f$ which are used to explain the minimality conditions of $f$.

\begin{align*}
	& A_1=\sum_{i\in1+4\N}a_i, &
    & A_2 =\sum_{i\in2+4\N} a_i, &
    & A_3 =\sum_{i\in3+4\N}a_i, &
    & A_4 =\sum_{i\in 4\N}a_i, \\
	& D_1=\sum_{i\in \N}ia_i, &
	& D_{-1}=\sum_{i\in\N} ia_i (-1)^{i-1},&
    & D_2=\sum_{i\in \N} i a_i 2^{i-1}, &
    & D_{-2}=\sum_{i\in \N} i a_i (-2)^{i-1}.
\end{align*}

From the derivative of $f(x)$, $f'(x)=d a_d x^{d-1} + (d-1)a_{d-1}x^{d-2}+\cdots+2 a_2 x+ a_1$, we can check that $D_1=f'(1),\  D_{-1}=f'(-1),\ D_2=f'(2)$ and $D_{-2}=f'(-2)$.

\begin{proposition}\label{prop:minimal_cond1}
  The dynamical system $(\Z/5\Z,f_{/1})$ is minimal if and only if $f$ satisfies one of the following cases:
  \begin{center}
  \begin{tabular}{|l|c c c c c|}
    \hline
    & $A_1$ & $A_2$ & $A_3$ & $A_4$ & $({\rm{mod}}~5)$ \\ \hline
	$\textrm{case I}$ & $1$ & $0$ & $0$ & $0$ &\\
	$\textrm{case II}$ & $4$ & $4$ & $3$ & $0$ &\\
	$\textrm{case III}$ & $1$ & $3$ & $3$ & $0$ &\\
	$\textrm{case IV}$ & $1$ & $4$ & $2$ & $0$ &\\
	$\textrm{case V}$ & $4$ & $2$ & $2$ & $0$ &\\
	$\textrm{case VI}$ & $0$ & $0$ & $3$ & $0$ &\\ \hline
  \end{tabular}
  \captionof{table}{The minimality conditions for $(\Z/5\Z,f_{/1})$}\label{table:minimal_cond1}
  \end{center}
\end{proposition}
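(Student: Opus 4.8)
The plan is to replace $f_{/1}$ by an explicit representative of degree at most $4$. Since $x^5\equiv x\pmod 5$ for every $x\in\Z/5\Z$, we have $x^i\equiv x^j\pmod 5$ whenever $i\equiv j\pmod 4$ and $i,j\ge 1$; collecting the monomials of $f$ according to the residue of the exponent modulo $4$ therefore gives
\[
 f_{/1}(x)\equiv 1+A_1x+A_2x^2+A_3x^3+A_4x^4\pmod 5,\qquad x\in\Z/5\Z .
\]
Hence, as a self-map of $\Z/5\Z$, $f_{/1}$ depends only on $(A_1,A_2,A_3,A_4)\bmod 5$. Moreover, since every self-map of $\Z/5\Z$ is represented by a unique polynomial of degree $\le 4$, the assignment $(A_1,A_2,A_3,A_4)\mapsto\bigl(x\mapsto 1+\sum_{j=1}^4 A_jx^j\bigr)$ is a bijection from $(\Z/5\Z)^4$ onto the set of self-maps $g$ of $\Z/5\Z$ with $g(0)=1$, and because $f(0)=1$ this is exactly the class in which $f_{/1}$ lies.

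Next I would rephrase minimality combinatorially. By Proposition~\ref{lem:mini} applied with $n=5$ (see also Theorem~\ref{thm:minimal}), $f_{/1}$ is minimal if and only if it is a single $5$-cycle on $\Z/5\Z$. Since $f_{/1}(0)=1$, any such cycle has the shape $0\to 1\to x_2\to x_3\to x_4\to 0$ with $(x_2,x_3,x_4)$ a permutation of $\{2,3,4\}$, so there are exactly $3!=6$ possibilities. Combined with the bijection of the previous paragraph, $(\Z/5\Z,f_{/1})$ is minimal if and only if $(A_1,A_2,A_3,A_4)$ is one of the six tuples corresponding to these six $5$-cycles, so it remains to identify those tuples.

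To do that I would invert the correspondence explicitly. Evaluating $f_{/1}$ at $x=1,2,3,4$ produces four affine relations in $A_1,\dots,A_4$ whose linear part is the matrix $\bigl(x^{j}\bigr)_{x,j\in\{1,2,3,4\}}$; its determinant equals $\bigl(\prod_{1\le x<x'\le 4}(x'-x)\bigr)\prod_{x=1}^4 x$, which is a unit modulo $5$, so each prescribed image $(f_{/1}(1),\dots,f_{/1}(4))$ determines $(A_1,\dots,A_4)$ uniquely. Adding the four relations and using $\sum_{x=1}^4 x^j\equiv 0\pmod 5$ for $j=1,2,3$ while $\sum_{x=1}^4 x^4\equiv -1\pmod 5$ shows $A_4\equiv 0$ for every permutation $f_{/1}$; one then solves the remaining $3\times 3$ system for $(A_1,A_2,A_3)$ for each of the six cyclic orders $(x_2,x_3,x_4)$, and the six outputs are precisely Cases I--VI of Table~\ref{table:minimal_cond1}.

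There is no genuine obstacle here: the whole argument reduces to a small amount of linear algebra over $\Z/5\Z$ in six explicit cases. The one point demanding care is the bijectivity of the parametrization $(A_1,\dots,A_4)\mapsto f_{/1}$ among maps fixing $0\mapsto 1$, since it is this that simultaneously guarantees that each tabulated tuple is indeed minimal and that no minimal case is omitted or counted twice.
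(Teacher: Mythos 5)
Your proposal is correct and follows essentially the same route as the paper's proof: reduce $f_{/1}$ modulo $5$ to the degree-$\le 4$ representative $1+A_1x+A_2x^2+A_3x^3+A_4x^4$ via $x^5\equiv x$, observe that minimality means a full cycle necessarily of the form $0\to 1\to x_2\to x_3\to x_4\to 0$ with $(x_2,x_3,x_4)$ one of the six permutations of $\{2,3,4\}$, and solve the resulting linear systems over $\Z/5\Z$ for $(A_1,A_2,A_3,A_4)$. Your added observations --- unique solvability from the Vandermonde-type determinant, and $A_4\equiv 0$ obtained by summing the four evaluation relations --- are correct refinements that the paper leaves implicit, but they do not change the argument.
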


\begin{proof} Since $x^5 \equiv x ~({\rm mod}~5)$ for any $x$, we can write that $f_{/1}(x)\equiv A_4x^4 + A_3x^3 + A_2x^2 + A_1x + 1 ~ ({\rm{mod}}~5)$. Then, we have that:
\begin{eqnarray*}
f_{/1}(0)&\equiv& 1 ~({\rm mod}~5),  \\
f_{/1}(1)&\equiv& A_4 + A_3 + A_2 + A_1 + 1~({\rm mod}~5),\\
f_{/1}(2)&\equiv& A_4 - 2A_3 - A_2 + 2A_1 + 1~({\rm mod}~5),\\
f_{/1}(3)&\equiv& A_4 + 2A_3 - A_2 - 2A_1 + 1~({\rm mod}~5),\\
f_{/1}(4)&\equiv& A_4 - A_3 + A_2 - A_1 + 1~({\rm mod}~5).
\end{eqnarray*}

By Theorem \ref{thm:minimal}, $f_{/1}$ is minimal if and only if  $f_{/1}$ has a full-cycle in $\Z/5\Z$. In order that $f_{/1}$ has a full-cycle, since $f_{/1}(0)=1$ and $f_{/1}^5(0)=f_{/1}^4(1)=0$, we should match the values in $\{ f_{/1}(1),f_{/1}^2(1),f_{/1}^3(1) \}$ with in $\{2,3,4\}$, bijectively. There are six choices for this match. Therefore, $f_{/1}$ is minimal if and only if  $f_{/1}$ must satisfy one of the following six cases.

\begin{description}
\item[case] I: The polynomial $f$ has the full-cycle as $f_{/1}(1) = 2, f_{/1}(2) = 3, f_{/1}(3) = 4, f_{/1}(4) = 0$ and $f_{/1}(0) = 1$. In this case, we have the following
%$f_{/1}(1)\equiv2, f_{/1}(2)\equiv3, f_{/1}(3)\equiv4, f_{/1}(4)\equiv0~({\rm mod}~5)$
\begin{eqnarray*}
f_{/1}(1)&\equiv& A_4 + A_3 + A_2 + A_1 + 1\equiv2~({\rm mod}~5),\\
f_{/1}(2)&\equiv& A_4 - 2A_3 - A_2 + 2A_1 + 1\equiv3~({\rm mod}~5),\\
f_{/1}(3)&\equiv& A_4 + 2A_3 - A_2 - 2A_1 + 1\equiv4~({\rm mod}~5),\\
f_{/1}(4)&\equiv& A_4 - A_3 + A_2 - A_1 + 1\equiv0~({\rm mod}~5).
\end{eqnarray*}
By computing the system of congruences, we obtain that $A_1\equiv1 \text{ and } A_2\equiv A_3\equiv A_4\equiv0~({\rm mod}~5)$.

Similarly, we can find the values $A_{1},~A_{2},~A_{3}$ and $A_{4}$ for other cases.

\item[case] II: The polynomial $f$ has the full-cycle as $f_{/1}(1) = 2, f_{/1}(2) = 4, f_{/1}(4) = 3, f_{/1}(3) = 0$ and $f_{/1}(0) = 1$.
In this case, we obtain that $A_1\equiv A_2\equiv4, ~~A_3\equiv3 \text{ and } A_4\equiv0~({\rm mod}~5)$.

\item[case] III: The polynomial $f$ has the full-cycle as $f_{/1}(1) = 3, f_{/1}(3) = 2, f_{/1}(2) = 4, f_{/1}(4) = 0$ and $f_{/1}(0) = 1$.
In this case, we obtain that $A_1\equiv1,~~ A_2\equiv A_3\equiv3 \text{ and } A_4\equiv0~({\rm mod}~5)$.

\item[case] IV: The polynomial $f$ has the full-cycle as $f_{/1}(1) = 3, f_{/1}(3) = 4, f_{/1}(4) = 2, f_{/1}(2) = 0$ and $f_{/1}(0) = 1$.
In this case, we obtain that $A_1\equiv1,~~ A_2\equiv4,~~ A_3\equiv2 \text{ and } A_4\equiv0~({\rm mod}~5)$.

\item[case] V: The polynomial $f$ has the full-cycle as $f_{/1}(1) = 4, f_{/1}(4) = 2, f_{/1}(2) = 3, f_{/1}(3) = 0$ and $f_{/1}(0) = 1$.
In this case, we obtain that $A_1\equiv4,~~ A_2\equiv A_3\equiv2 \text{ and } A_4\equiv0~({\rm mod}~5)$.

\item[case] VI: The polynomial $f$ has the full-cycle as $f_{/1}(1) = 4, f_{/1}(4) = 3, f_{/1}(3) = 2, f_{/1}(2) = 0$ and $f_{/1}(0) = 1$.
In this case, we obtain that $A_1\equiv A_2\equiv0,~~ A_3\equiv3 \text{ and } A_4\equiv0~({\rm mod}~5)$.
\end{description}
Therefore, we obtain the statement.
\end{proof}

\begin{proposition}\label{prop:minimal_cond2}
$(f^5)'(0) \equiv 1 ~({\rm mod}~5)$ if and only if $a_1 D_1 D_2 D_{-2} D_{-1} \equiv 1 ~({\rm mod}~5)$.
\end{proposition}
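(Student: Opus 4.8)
The plan is to expand $(f^5)'(0)$ by the chain rule and evaluate it along the orbit of $0$. Writing $x_k=f^k(0)$, the chain rule gives
\[
(f^5)'(0)=\prod_{k=0}^{4} f'(x_k)=f'(x_4)\,f'(x_3)\,f'(x_2)\,f'(x_1)\,f'(x_0),
\]
and since $f(0)=1$ we have $x_0=0$. The point I would use next is that in the situation where this proposition is applied --- namely when $(\Z/5\Z,f_{/1})$ is minimal, i.e. condition (1) of Corollary \ref{cor:minimal_Z_5f} holds, equivalently $f$ falls into one of the six cases of Proposition \ref{prop:minimal_cond1} --- the orbit $x_0,x_1,x_2,x_3,x_4$ reduces modulo $5$ to a permutation of $\{0,1,2,3,4\}$. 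Hence, modulo $5$, the product above equals $f'(0)f'(1)f'(2)f'(3)f'(4)$, independently of the order in which the factors occur.

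It then remains to identify each factor. From $f'(x)=\sum_{i} i a_i x^{i-1}$ one reads directly $f'(0)=a_1$, $f'(1)=D_1$, $f'(-1)=D_{-1}$, $f'(2)=D_2$ and $f'(-2)=D_{-2}$; and because $3\equiv-2$ and $4\equiv-1\ (\mathrm{mod}\ 5)$ while $f'$ has coefficients in $\Z_5$, we get $f'(3)\equiv D_{-2}$ and $f'(4)\equiv D_{-1}\ (\mathrm{mod}\ 5)$. Substituting,
\[
(f^5)'(0)\equiv f'(0)f'(1)f'(2)f'(3)f'(4)\equiv a_1 D_1 D_2 D_{-2} D_{-1}\ (\mathrm{mod}\ 5),
\]
so $(f^5)'(0)\equiv 1\ (\mathrm{mod}\ 5)$ if and only if $a_1 D_1 D_2 D_{-2} D_{-1}\equiv 1\ (\mathrm{mod}\ 5)$.

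The computation itself is mechanical; the one place that needs care is the claim that the orbit $\{x_0,\dots,x_4\}$ exhausts $\Z/5\Z$. This is precisely the full-cycle hypothesis on $f_{/1}$, and without it the equivalence can fail (for instance when the orbit of $0$ never meets a residue at which $f'$ vanishes modulo $5$). So I would either state the proposition under the standing assumption that $f_{/1}$ is minimal, or --- matching the layout of Proposition \ref{prop:minimal_cond1} --- verify case by case that in each of the six full-cycle patterns the orbit of $0$ covers $\{0,1,2,3,4\}$, which collapses every case to the single product $a_1 D_1 D_2 D_{-2} D_{-1}$. The main obstacle is therefore not a hard estimate but making this hypothesis explicit and correctly bookkeeping the residue identifications $-1\equiv 4$ and $-2\equiv 3$ modulo $5$.
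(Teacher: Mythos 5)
Your proposal is correct and follows essentially the same route as the paper: chain rule to get $(f^5)'(0)\equiv\prod_{k=0}^{4}f'(f^k(0))$, identification of the orbit of $0$ with $\{0,1,2,3,4\}$ modulo $5$, and the substitutions $f'(0)=a_1$, $f'(1)=D_1$, $f'(2)=D_2$, $f'(3)\equiv D_{-2}$, $f'(4)\equiv D_{-1}$. Your observation that the step $\prod f'(f^k(0))\equiv f'(0)f'(1)f'(2)f'(3)f'(4)$ silently uses the full-cycle hypothesis on $f_{/1}$ is apt --- the paper makes the same move without stating it, and the proposition is only applied in that context (Corollary \ref{cor:minimal_Z_5f}).
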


\begin{proof}
We compute that:
\begin{eqnarray*}
	(f^5)'(0)&\equiv& \prod_{i=0}^{4} f'(f^i(0))\\
	&\equiv& f'(0)f'(1)f'(2)f'(3)f'(4)\\
	&\equiv& a_1 \sum_{i=1}^{d}i a_i \cdot\sum_{i=1}^{d}i a_i  2^{i-1} \cdot\sum_{i=1}^{d}i a_i (-2)^{i-1} \cdot\sum_{i=1}^{d} i a_i (-1)^{i-1}\\
	&\equiv& a_1 D_1 D_2 D_{-2} D_{-1} ~({\rm mod}~5).
\end{eqnarray*}
Therefore, we obtain the statement.
\end{proof}

\begin{theorem}\label{thm:minimal5}
The dynamical system $(\Z_5,f)$ is minimal if and only if the terms $A_1, A_2, A_3, A_4, D_1, D_{-1}, D_2, D_{-2}$ and $a_1$ derived from the polynomial $f$ satisfy one of the following six cases:

\begin{center}
  \begin{tabular}{|l|l|}	\hline
    $\textrm{case I}$ & $A_1 = 1 + \alpha_1 5,\ A_2 = 0 + \alpha_2 5,\ A_3 = 0 + \alpha_3 5,\ A_4 = 0 + \alpha_4 5$, \\
	& $a_1 D_1 D_2 D_{-2} D_{-1} \equiv 1 ~({\rm mod}~5)$, \\
	& $(4\alpha_1 + \alpha_2 + 4\alpha_3 + \alpha_4) $ \\
	& $ \quad + (3\alpha_1 + 4\alpha_2 + 2\alpha_3 + \alpha_4)D_{-1} +(1 + 2\alpha_1 + 4\alpha_2 + 3\alpha_3 + \alpha_4)D_{-1} D_{-2}$ \\
	& $\quad \quad + (\alpha_1 + \alpha_2 + \alpha_3 + \alpha_4)D_{-1} D_2 D_{-2} \not\equiv 0 ~({\rm mod}~5)$. \\
	\hline
	$\textrm{case II}$ & $A_1 = 4 + \alpha_1 5,\ A_2 = 4 + \alpha_2 5,\ A_3 = 3 + \alpha_3 5,\ A_4 = 0 + \alpha_4 5$, \\
	& $a_1 D_1 D_2 D_{-2} D_{-1} \equiv 1 ~({\rm mod}~5)$, \\
	& $(4 + 3\alpha_1 + 4\alpha_2 + 2\alpha_3 + \alpha_4) $\\
	& $\quad + (4\alpha_1 + \alpha_2 + 4\alpha_3 + \alpha_4)D_{-2} +(2\alpha_1 + 4\alpha_2 + 3\alpha_3 + \alpha_4)D_{-1} D_{-2}$ \\
	& $\quad \quad + (2 + \alpha_1 + \alpha_2 + \alpha_3 + \alpha_4) D_{-1} D_2 D_{-2} \not\equiv 0 ~({\rm mod}~5)$. \\
	\hline
	$\textrm{case III}$ & $A_1 = 1 + \alpha_1 5,\ A_2 = 3 + \alpha_2 5,\ A_3 = 3 + \alpha_3 5,\ A_4 = 0 + \alpha_4 5$, \\
	& $a_1 D_1 D_2 D_{-2} D_{-1} \equiv 1 ~({\rm mod}~5)$. \\
	\hline
	$\textrm{case IV}$ & $A_1 = 1 + \alpha_1 5,\ A_2 = 4 + \alpha_2 5,\ A_3 = 2 + \alpha_3 5,\ A_4 = 0 + \alpha_4 5$, \\
	& $a_1 D_1 D_2 D_{-2} D_{-1} \equiv 1 ~({\rm mod}~5)$, \\
	& $(4 + 2\alpha_1 + 4\alpha_2 + 3\alpha_3 + \alpha_4) $\\
	& $\quad  + (4\alpha_1 + \alpha_2 + 4\alpha_3 + \alpha_4) D_2 +(3\alpha_1 +  4\alpha_2 + 2\alpha_3 + \alpha_4 ) D_{-1} D_2 $ \\
	& $\quad \quad + (2 + \alpha_1 + \alpha_2  + \alpha_3 + \alpha_4) D_{-1} D_2 D_{-2} \not\equiv 0 ~({\rm mod}~5)$. \\
	\hline
	$\textrm{case V}$ & $A_1 = 4 + \alpha_1 5,\ A_2 = 2 + \alpha_2 5,\ A_3 = 2 + \alpha_3 5,\ A_4 = 0 + \alpha_4 5$, \\
	& $a_1 D_1 D_2 D_{-2} D_{-1} \equiv 1 ~({\rm mod}~5)$, \\
	& $(4 +  3\alpha_1 + 4\alpha_2 + 2\alpha_3 + \alpha_4) $ \\
	& $\quad + (1 + 2\alpha_1 + 4\alpha_2 + 3\alpha_3 + \alpha_4) D_{-2} +(4 + 4\alpha_1 + \alpha_2 + 4\alpha_3 + \alpha_4) D_2 D_{-2} $\\
	& $\quad \quad + (2 + \alpha_1 + \alpha_2 + \alpha_3 + \alpha_4) D_{-1} D_2 D_{-2}  \not\equiv 0 ~({\rm mod}~5)$. \\
	\hline
	$\textrm{case VI}$ & $A_1 = 0 + \alpha_1 5,\ A_2 = 0 + \alpha_2 5,\ A_3 = 3 + \alpha_3 5,\ A_4 = 0 + \alpha_4 5$, \\
	& $a_1 D_1 D_2 D_{-2} D_{-1} \equiv 1 ~({\rm mod}~5)$, \\
	& $(4 + 2\alpha_1 + 4\alpha_2 + 3\alpha_3 + \alpha_4) $ \\
	& $\quad + (1 + 3\alpha_1 + 4\alpha_2  + 2\alpha_3 + \alpha_4) D_2 +(4\alpha_1 + \alpha_2 + 4\alpha_3 + \alpha_4 )D_2 D_{-2} $ \\
	& $\quad \quad + (1 + \alpha_1 + \alpha_2 + \alpha_3 + \alpha_4) D_{-1} D_2 D_{-2} \not\equiv 0~({\rm mod}~5)$. \\
	\hline
  \end{tabular}
  \captionof{table}{Minimality conditions for $(\Z_5,f)$}
  \end{center}
Where, for $1\leq i \leq 4$, each $\alpha_i$ is an element of $\Z_5$.
\end{theorem}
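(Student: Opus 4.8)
The plan is to invoke Corollary~\ref{cor:minimal_Z_5f}, which reduces minimality of $(\Z_5,f)$ to three conditions: (1) $f_{/1}$ has a full cycle in $\Z/5\Z$; (2) $(f^5)'(0)\equiv 1\pmod 5$; and (3) $(f^5)(0)\not\equiv 0\pmod{25}$. By Proposition~\ref{prop:minimal_cond1}, condition (1) is exactly the disjunction of the six cases with the stated residues of $A_1,A_2,A_3,A_4$ modulo $5$; by Proposition~\ref{prop:minimal_cond2}, condition (2) is exactly $a_1D_1D_2D_{-2}D_{-1}\equiv 1\pmod 5$. Hence the entire content of the theorem is the translation of condition (3) into the displayed congruence in each of the six cases.

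To evaluate $(f^5)(0)$ modulo $25$ I would lift the mod-$5$ orbit of $0$. Fix a case; by the proof of Proposition~\ref{prop:minimal_cond1} the $f_{/1}$-orbit of $0$ is a full $5$-cycle $0=y_0\to y_1=1\to y_2\to y_3\to y_4\to y_0$, where $(y_2,y_3,y_4)$ is the permutation of $\{2,3,4\}$ recorded for that case. Set $x_k:=f^{\,k}_{/2}(0)\in\Z/25\Z$ and write $x_k=y_k+5z_k$ with $z_k\in\Z/5\Z$; since $f$ is $1$-Lipschitz and $x_1=f(0)=1=y_1$, we have $z_0=z_1=0$. For a polynomial over $\Z_5$ the Taylor expansion gives $f(y+5z)\equiv f(y)+5z\,f'(y)\pmod{25}$, and writing $f(y_k)=y_{k+1}+5c_k$ (legitimate modulo $5$ by (1), with $c_k\equiv(f(y_k)-y_{k+1})/5\pmod 5$) yields the linear recursion $z_{k+1}\equiv c_k+z_k\,f'(y_k)\pmod 5$. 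Unrolling from $z_1=0$,
\[
 z_5\equiv c_4+c_3\,f'(y_4)+c_2\,f'(y_3)f'(y_4)+c_1\,f'(y_2)f'(y_3)f'(y_4)\pmod 5,
\]
and since $y_5=y_0=0$ we get $(f^5)(0)=x_5\equiv 5z_5\pmod{25}$; thus (3) holds if and only if $z_5\not\equiv 0\pmod 5$. Note that $f'(0)=a_1$ and $f'(1)=D_1$ never enter: the orbit point $1$ is reached only at step $1$, where $z_1=0$, and $0$ is not among $y_2,y_3,y_4$. This is why the displayed conditions involve only $D_{-1},D_2,D_{-2}$.

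The remaining inputs are $f'(y_k)$ and $c_k$ modulo the relevant power of $5$. Since $y_k\in\{1,2,3,4\}$ reduces modulo $5$ to $1,2,-2,-1$ and $f'\bmod 5$ is a function on $\Z/5\Z$, we get $f'(y_k)\equiv D_1,D_2,D_{-2},D_{-1}$ respectively. For each $c_k$ one needs $f(r)$ modulo $25$ for $r\in\{1,2,3,4\}$. The key observation is that $1,7,18,-1$ have multiplicative orders $1,4,4,2$ modulo $25$ and are $\equiv 1,2,3,4$ modulo $5$; writing $2=7-5$, $3=18-15$, $4=-1+5$ and expanding binomially modulo $25$ (squares of $5$ and $15$ vanish) yields
\begin{align*}
 f(1)&\equiv 1+A_1+A_2+A_3+A_4,\\
 f(2)&\equiv 1+7A_1-A_2-7A_3+A_4-5D_2,\\
 f(3)&\equiv 1+18A_1-A_2+7A_3+A_4-15D_{-2},\\
 f(4)&\equiv 1-A_1+A_2-A_3+A_4+5D_{-1}\pmod{25},
\end{align*}
using $\sum_{j\ge 1}ja_j7^{j-1}=f'(7)\equiv D_2$, $\sum_{j\ge 1}ja_j18^{j-1}=f'(18)\equiv D_{-2}$, and $\sum_{j\ge 1}ja_j(-1)^{j-1}=D_{-1}$ modulo $5$. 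Substituting $A_i=\bar A_i+5\alpha_i$, with $\bar A_i$ the mod-$5$ value for the case at hand, and dividing by $5$, turns each $c_k$ into an explicit affine function of $\alpha_1,\dots,\alpha_4$ and one of $D_{-1},D_2,D_{-2}$. Plugging these into the formula for $z_5$ and collecting the monomials in $D_{-1},D_2,D_{-2}$ produces, case by case, the displayed polynomial, and ``$z_5\not\equiv 0\pmod 5$'' is precisely the stated inequality.

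I expect the main obstacle to be the bookkeeping of this final step rather than any conceptual difficulty: each of the six cases has its own cyclic order on $\{2,3,4\}$, hence its own pattern of which $D$ multiplies which $c_k$ and which monomials can occur, and the substitution $A_i=\bar A_i+5\alpha_i$ must be performed modulo $25$ before reducing modulo $5$, so wrong signs and off-by-constant errors are the real hazard. Organizing all six computations around the four closed forms for $f(r)\bmod 25$ above keeps them uniform and tractable, and in each case the resulting expression for $z_5$ is read off directly to give the condition in the table.
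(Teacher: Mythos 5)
Your strategy is exactly the paper's: reduce to the three conditions of Corollary~\ref{cor:minimal_Z_5f} via Propositions~\ref{prop:minimal_cond1} and~\ref{prop:minimal_cond2}, then lift the mod-$5$ orbit of $0$ to $\Z/25\Z$ by a first-order Taylor expansion at each orbit point and unroll the resulting linear recursion for $z_5$. The one place where you deviate is your ``key observation'': to evaluate $f(r)\bmod 25$ for $r\equiv 2,3$ you replace $2,3$ by $7,18$ (which satisfy $7^4\equiv 18^4\equiv 1\pmod{25}$) and pick up the correction terms $-5D_2$ and $-15D_{-2}$. There you are right and the paper is wrong: the paper's proof asserts $\sum_i a_i 2^i\equiv 2A_1-A_2-2A_3+A_4\pmod{25}$, which tacitly uses $2^4\equiv 1\pmod{25}$; this holds only modulo $5$ (already $f(x)=x^5+1$ gives $f(2)=33\equiv 8$, not $3$, mod $25$). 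So your plan, carried out correctly, does \emph{not} reproduce the displayed table; in case~I it yields the paper's bracket plus the extra terms $4D_{-1}+2D_{-1}D_{-2}+4D_{-1}D_2D_{-2}$ (coming from $c_4=c_4^{\mathrm{paper}}+D_{-1}$, $c_3=c_3^{\mathrm{paper}}+3+2D_{-2}$, $c_2=c_2^{\mathrm{paper}}-D_2$), and similarly in the other cases.

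In fact the theorem as stated is false, so no correct execution of your (or any) plan can establish it. Take $f(x)=2x^9-2x^5+x+1$. Then $A_1=1$, $A_2=A_3=A_4=0$, so all $\alpha_i=0$ and $f$ is in case~I; $a_1=1$ and $D_1\equiv D_2\equiv D_{-2}\equiv D_{-1}\equiv 4\pmod 5$, so $a_1D_1D_2D_{-2}D_{-1}\equiv 256\equiv 1\pmod 5$; and the displayed case~I expression reduces to $D_{-1}D_{-2}\equiv 16\equiv 1\not\equiv 0\pmod 5$. Yet the orbit of $0$ under $f_{/2}$ in $\Z/25\Z$ is
\[
0\longrightarrow 1\longrightarrow 2\longrightarrow 13\longrightarrow 24\longrightarrow 0,
\]
so $f^{5}(0)\equiv 0\pmod{25}$, condition (3) of Corollary~\ref{cor:minimal_Z_5f} fails, and $f$ is not minimal (your formula predicts this: $z_5\equiv 4+1\cdot 4+2\cdot 16+0\equiv 0\pmod 5$, whereas the paper's bracket gives $1$). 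So the honest conclusion of your review should be that your approach is the correct repair of the paper's computation: the remaining work is only the bookkeeping you describe, but its output is a corrected table, not the one printed in the theorem.
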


\begin{proof}
    From Corollary \ref{cor:minimal_Z_5f}, $(\Z_5,f)$ is minimal if and only if $f$ satisfies the three conditions. the first condition which is that $f_{/1}$ has a full cycle in $\Z/5\Z$ is equivalent to that $f$ satisfies one of the six cases in Table \ref{table:minimal_cond1}, by Proposition \ref{prop:minimal_cond1}. The second condition which is that $(f^5)'(0) \equiv 1 ~ (\mod 5)$ is equivalent to that $a_1 D_1 D_2 D_{-2} D_{-1} \equiv 1~({\rm mod}~5)$ by Proposition \ref{prop:minimal_cond2}.

    Before we examine the third condition, we compute the following for convenience.
  \begin{enumerate}
	\item Let $T\equiv 1 ~({\rm mod}~5)$. We obtain that
		\begin{align*}
		f(T) & = 1+\sum_{i=1}^{d}a_{i} (1+(T-1))^{i}\\
		&\equiv  1+\sum_{i=1}^{d}a_{i}(1^{i} + i \cdot 1^{i-1} (T-1))~({\rm mod}~25)\\
		&\equiv  A_4 + A_3 + A_2 + A_1 + 1 +D_1 (T-1)~({\rm mod}~25).\\
		\intertext{Therefore,}
		f(T)&\equiv A_4 + A_3 + A_2 + A_1 + 1 ~({\rm mod}~5).
		\end{align*}

	\item Let $T\equiv 2 ~({\rm mod}~5)$.  We obtain that
		\begin{align*}
		f(T) & = 1+\sum_{i=1}^{d}a_{i} (2+(T-2))^{i}\\
		&\equiv  1+\sum_{i=1}^{d}a_{i}(2^{i} + i \cdot 2^{i-1} (T-2))~({\rm mod}~25)\\
		&\equiv  A_4 -2 A_3 - A_2 + 2A_1 + 1 + D_2 (T-2)~({\rm mod}~25). \\
		\intertext{Therefore,}
		f(T)&\equiv A_4 -2 A_3 - A_2 + 2A_1 + 1 ~({\rm mod}~5).
		\end{align*}

	\item Let $T\equiv -2 ~({\rm mod}~5)$.  We obtain that
		\begin{align*}
		f(T) & = 1+\sum_{i=1}^{d}a_{i} ((-2)+(T+2))^{i}\\
		&\equiv  1+\sum_{i=1}^{d}a_{i}((-2)^{i} + i \cdot (-2)^{i-1} (T+2))~({\rm mod}~25)\\
		&\equiv  A_4 +2 A_3 - A_2 - 2A_1 + 1 + D_{-2} (T+2)~({\rm mod}~25).\\
		\intertext{Therefore,}
		f(T)&\equiv A_4 +2 A_3 - A_2 - 2A_1 + 1  ~({\rm mod}~5).
		\end{align*}

	\item Let $T\equiv -1 ~({\rm mod}~5)$. We obtain that
		\begin{align*}
		f(T) & = 1+\sum_{i=1}^{d}a_{i} ((-1)+(T+1))^{i}\\
		&\equiv  1+\sum_{i=1}^{d}a_{i}((-1)^{i} + i \cdot (-1)^{i-1} (T+1))~({\rm mod}~25)\\
		&\equiv  A_4 - A_3 + A_2 - A_1 + 1 + D_{-1} (T+1)~({\rm mod}~25). \\
		\intertext{Therefore,}
		f(T)&\equiv A_4 - A_3 + A_2 - A_1 + 1 ~({\rm mod}~5).
		\end{align*}
	\end{enumerate}

 Now we will examine conditions for $f^5(0) \not\equiv 0 ~({\rm mod}~25)$ which is the third condition of Corollary \ref{cor:minimal_Z_5f}. In examining the condition, we will check the six cases in Proposition \ref{prop:minimal_cond1} one by one.

\begin{description}
	\item[case] I: Consider the case I in Proposition \ref{prop:minimal_cond1}, which is that
	\[ 	A_1\equiv 1, \text{ and } A_2\equiv A_3\equiv A_4\equiv 0~({\rm mod}~5). \]
	We have that $f(0)=1$ and let $T_2=f^2(0)$, then
	\[ T_2=f^2(0)=f(1)= A_4 + A_3 + A_2 + A_1 + 1 \]
	and $T_2 \equiv 2~({\rm mod}~5)$. Let $T_3=f^3(0)$, then
	\[ T_3\equiv f^3(0)\equiv f(T_2)\equiv A_4 - 2A_3 - A_2 + 2A_1 + 1 + D_2 (T_2-2)~~~({\rm mod}~25) \]
	and $T_3 \equiv -2~({\rm mod}~5)$. Let $T_4=f^4(0)$, then
	\[ T_4\equiv f^4(0)\equiv f(T_3)\equiv A_4 + 2A_3 - A_2 - 2A_1 + 1 + D_{-2} (T_3+2)~({\rm mod}~25) \]
	and $T_4 \equiv -1~({\rm mod}~5)$. Finally we compute $f^5(0)$:
	\[ f^{5}(0)\equiv f(T_4)\equiv A_4 - A_3 + A_2 - A_1 + 1 + D_{-1} (T_4+1)~({\rm mod}~25). \]
	Thus,
	\begin{eqnarray*}
		f^{5}(0)&\equiv& A_4 - A_3 + A_2 - A_1 + 1 + D_{-1} (T_4+1)\\
		&\equiv& A_4 - A_3 + A_2 - A_1 + 1 + D_{-1} (A_4 + 2A_3 - A_2 - 2A_1 + 1\\
		&&\quad + D_{-2} (T_3+2)+1)\\
		&\equiv& A_4 - A_3 + A_2 - A_1 + 1 + D_{-1} (A_4 + 2A_3 - A_2 - 2A_1 + 1\\
		&&\quad + D_{-2} (A_4 - 2A_3 - A_2 + 2A_1 + 1 + D_2 (T_2-2)+1))\\
		&\equiv& A_4 - A_3 + A_2 - A_1 + 1 + D_{-1} (A_4 + 2A_3 - A_2 - 2A_1 + 1\\
		&&\quad + D_{-2} (A_4 - 2A_3 - A_2 + 2A_1 + 1 \\
		&&\quad \quad + D_2 (A_4 + A_3 + A_2 + A_1 + 1 -2)+1))~({\rm mod}~25).
	\end{eqnarray*}
Substitute $A_1 \equiv 1 + \alpha_1 5, A_2 \equiv \alpha_2 5, A_3 \equiv \alpha_3 5$, and $A_4 \equiv \alpha_4 5$ with $\alpha_i \in \Z_5$ and then we obtain
\begin{align*}
	f^{5}(0)\equiv& 5\bigl((4\alpha_1 + \alpha_2 + 4\alpha_3 + \alpha_4) \\
	 & \quad + (3\alpha_1 + 4\alpha_2 + 2\alpha_3 + \alpha_4)D_{-1} + (1 + 2\alpha_1 + 4\alpha_2 + 3\alpha_3 + \alpha_4)D_{-1} D_{-2} \\
	& \quad\quad + (\alpha_1 + \alpha_2 + \alpha_3 + \alpha_4)D_{-1} D_2 D_{-2}\bigr) ~({\rm mod}~25).
\end{align*}
In order to have $f^5(0) \not\equiv 0 ~({\rm mod}~25)$, the terms derived from $f$ must satisfy
\begin{align*}
&(4\alpha_1 + \alpha_2 + 4\alpha_3 + \alpha_4) \\
	 & \quad + (3\alpha_1 + 4\alpha_2 + 2\alpha_3 + \alpha_4)D_{-1} + (1 + 2\alpha_1 + 4\alpha_2 + 3\alpha_3 + \alpha_4)D_{-1} D_{-2} \\
	& \quad\quad + (\alpha_1 + \alpha_2 + \alpha_3 + \alpha_4)D_{-1} D_2 D_{-2} \not\equiv 0 ~({\rm mod}~5).
\end{align*}
In the same manner, we can compute the formulas of $f^5(0)$ for the other five cases as follows.
\item[case] II: Consider the case II in Proposition \ref{prop:minimal_cond1}, which is that
\[
A_1\equiv A_2\equiv4, A_3\equiv3, \text{ and } A_4\equiv 0~({\rm mod}~5).
\]
Substitute $A_1 \equiv 4 + \alpha_1 5, A_2 \equiv 4 + \alpha_2 5 + 4, A_3 \equiv 3 + \alpha_3 5$, and $A_4 \equiv \alpha_4 5$ with $\alpha_i \in \Z_5$  and then we obtain
\begin{align*}
f^{5}(0) \equiv& 5\bigl((4 + 3\alpha_1 + 4\alpha_2 + 2\alpha_3 + \alpha_4) \\
&\quad + (4\alpha_1 + \alpha_2 + 4\alpha_3 + \alpha_4)D_{-2} + (2\alpha_1 + 4\alpha_2 + 3\alpha_3 + \alpha_4)D_{-1} D_{-2} \\
&\quad\quad + (2 + \alpha_1 + \alpha_2 + \alpha_3 + \alpha_4) D_{-1} D_2 D_{-2}\bigr) ~({\rm mod}~25).
\end{align*}
In order to have $f^5(0) \not\equiv 0 ~({\rm mod}~25)$, the terms derived from $f$ must satisfy
\begin{align*}
&(4 + 3\alpha_1 + 4\alpha_2 + 2\alpha_3 + \alpha_4) \\
&\quad + (4\alpha_1 + \alpha_2 + 4\alpha_3 + \alpha_4)D_{-2} + (2\alpha_1 + 4\alpha_2 + 3\alpha_3 + \alpha_4)D_{-1} D_{-2} \\
&\quad\quad + (2 + \alpha_1 + \alpha_2 + \alpha_3 + \alpha_4) D_{-1} D_2 D_{-2} \not\equiv 0 ~({\rm mod}~5).
\end{align*}
\item[case] III: Consider the case III in Proposition \ref{prop:minimal_cond1}, which is that
$$
A_1\equiv 1, A_2\equiv A_3\equiv3, \text{ and } A_4\equiv 0~({\rm mod}~5).
$$

Substitute $A_1 \equiv 1 + \alpha_1 5, A_2 \equiv 3 + \alpha_2 5, A_3 \equiv 3 + \alpha_3 5$, and $A_4 \equiv \alpha_4 5$ with $\alpha_i \in \Z_5$ and then we obtain
\begin{eqnarray*}
	f^{5}(0)&\equiv& 5D_{-1} ~({\rm mod}~25).
\end{eqnarray*}
and note that $a_1 D_1 D_2 D_{-2} D_{-1} \equiv 1 ~({\rm mod}~5)$ leads to $5D_{-1} \not\equiv 0~({\rm mod}~25)$, always.

\item[case] IV: Consider the case IV in Proposition \ref{prop:minimal_cond1}, which is that
$$
A_1\equiv 1, A_2\equiv4, A_3\equiv2, \text{ and } A_4\equiv 0~({\rm mod}~5).
$$

Substitute $A_1 \equiv 1 + \alpha_1 5, A_2 \equiv 4 + \alpha_2 5, A_3 \equiv 2 + \alpha_3 5$, and $A_4 \equiv \alpha_4 5$ with $\alpha_i \in \Z_5$  and then we obtain
\begin{align*}
	f^{5}(0)\equiv&5\bigl((4 + 2\alpha_1 + 4\alpha_2 + 3\alpha_3 + \alpha_4) \\
	&\quad + (4\alpha_1 + \alpha_2 + 4\alpha_3 + \alpha_4) D_2 + (3\alpha_1 +  4\alpha_2 + 2\alpha_3 + \alpha_4 ) D_{-1} D_2 \\
	&\quad\quad + (2 + \alpha_1 + \alpha_2  + \alpha_3 + \alpha_4) D_{-1} D_2 D_{-2} \bigr) ~({\rm mod}~25).
\end{align*}
In order to have $f^5(0) \not\equiv 0 ~({\rm mod}~25)$, the terms derived from $f$ must satisfy
\begin{align*}
&(4 + 2\alpha_1 + 4\alpha_2 + 3\alpha_3 + \alpha_4) \\
	&\quad + (4\alpha_1 + \alpha_2 + 4\alpha_3 + \alpha_4) D_2 + (3\alpha_1 +  4\alpha_2 + 2\alpha_3 + \alpha_4 ) D_{-1} D_2 \\
	&\quad\quad + (2 + \alpha_1 + \alpha_2  + \alpha_3 + \alpha_4) D_{-1} D_2 D_{-2} \not\equiv 0 ~({\rm mod}~5).
\end{align*}
\item[case] V: Consider the case V in Proposition \ref{prop:minimal_cond1}, which is that
$$
A_1\equiv 4, A_2\equiv A_3\equiv2, \text{ and } A_4\equiv 0~({\rm mod}~5).
$$

Substitute $A_1 \equiv 4 + \alpha_1 5, A_2 \equiv 2 + \alpha_2 5, A_3 \equiv 2 + \alpha_3 5$, and $A_4 \equiv \alpha_4 5$ with $\alpha_i \in \Z_5$  and then we obtain
\begin{align*}
	f^{5}(0)\equiv& 5\bigl((4 + 3\alpha_1 + 4\alpha_2 + 2\alpha_3 + \alpha_4) \\
	&\quad + (1 + 2\alpha_1 + 4\alpha_2 + 3\alpha_3 + \alpha_4) D_{-2} + (4 + 4\alpha_1 + \alpha_2 + 4\alpha_3 + \alpha_4) D_2 D_{-2} \\
	&\quad\quad + (2 + \alpha_1 + \alpha_2 + \alpha_3 + \alpha_4) D_{-1} D_2 D_{-2}\bigr)~({\rm mod}~25).
\end{align*}
In order to have $f^5(0) \not\equiv 0 ~({\rm mod}~25)$, the terms derived from $f$ must satisfy
\begin{align*}
&(4 + 3\alpha_1 + 4\alpha_2 + 2\alpha_3 + \alpha_4) \\
	&\quad + (1 + 2\alpha_1 + 4\alpha_2 + 3\alpha_3 + \alpha_4) D_{-2} + (4 + 4\alpha_1 + \alpha_2 + 4\alpha_3 + \alpha_4) D_2 D_{-2} \\
	&\quad\quad + (2 + \alpha_1 + \alpha_2 + \alpha_3 + \alpha_4) D_{-1} D_2 D_{-2}  \not\equiv 0 ~({\rm mod}~5).
\end{align*}
\item[case] VI: Consider the case VI in Proposition \ref{prop:minimal_cond1}, which is that
$$
A_1\equiv A_2\equiv 0, A_3\equiv3, \text{ and } A_4\equiv 0~({\rm mod}~5).
$$
Substitute $A_1 \equiv \alpha_1 5, A_2 \equiv \alpha_2 5, A_3 \equiv 3 + \alpha_3 5$, and $A_4 \equiv \alpha_4 5$ with $\alpha_i \in \Z_5$ and then we obtain
\begin{align*}
	f^{5}(0)\equiv& 5\bigl((4 + 2\alpha_1 + 4\alpha_2 + 3\alpha_3 + \alpha_4) \\
	&\quad + (1 + 3\alpha_1 + 4\alpha_2  + 2\alpha_3 + \alpha_4) D_2 +(4\alpha_1 + \alpha_2 + 4\alpha_3 + \alpha_4 )D_2 D_{-2} \\
	&\quad\quad + (1 + \alpha_1 + \alpha_2 + \alpha_3 + \alpha_4) D_{-1} D_2 D_{-2} \bigr)~({\rm mod}~25).
\end{align*}
In order to have $f^5(0) \not\equiv 0 ~({\rm mod}~25)$, the terms derived from $f$ must satisfy
\begin{align*}
&(4 + 2\alpha_1 + 4\alpha_2 + 3\alpha_3 + \alpha_4) \\
	&\quad + (1 + 3\alpha_1 + 4\alpha_2  + 2\alpha_3 + \alpha_4) D_2 +(4\alpha_1 + \alpha_2 + 4\alpha_3 + \alpha_4 )D_2 D_{-2} \\
	&\quad\quad + (1 + \alpha_1 + \alpha_2 + \alpha_3 + \alpha_4) D_{-1} D_2 D_{-2} \not\equiv 0~({\rm mod}~5).
\end{align*}
\end{description}
Therefore, the proof is completed.
\end{proof}

We present an example.
\begin{example}
The polynomial $f(x)=5x^5 + 10x^4 - 5x^2 - 4x + 1$ is a minimal map in $\Z_5$.
\end{example}
\begin{proof} The terms derived from $f$ are
\begin{align*}
	& A_1 =a_1+a_5 = 1, \quad\quad
    A_2 =a_2 = -5, &
    & A_3 =a_3 = 0, \quad\quad
     A_4 =a_4 = 10, \\
	& D_1=a_1 + 2a_2 - 2a_3 -a_4 = -24, &
	& D_{-1}=a_1 - 2a_2 - 2a_3 +a_4 = 16,\\
    & D_2=a_1 + 2a_2 2 - 2a_32^2 -a_4 2^4 = -104, &
    & D_{-2}=a_1 - 2a_2 2 - 2a_3 2^2 + a_4 2^3 = 96.
\end{align*}

Then we have that $A_1\equiv 1, \text{ and } A_2\equiv A_3\equiv A_4\equiv 0~({\rm mod}~5)$, which fits the case I in Theorem \ref{thm:minimal5}. Also, we obtain that
$$
\alpha_1 = 0, \alpha_2 = -1, \alpha_3 = 0, \alpha_4 = 2.
$$
We check that the second condition of the case I is satisfied:
\[ a_1 D_1 D_2 D_{-2} D_{-1} \equiv (-4)\cdot (-24)\cdot (-104)\cdot (96) \cdot 16 \equiv  1 ~({\rm mod}~5). \]
We check that the third condition of the case I is satisfied:
\begin{align*}
&(4\alpha_1 + \alpha_2 + 4\alpha_3 + \alpha_4) + (3\alpha_1 + 4\alpha_2 + 2\alpha_3 + \alpha_4)D_{-1} \\
&\quad + (1 + 2\alpha_1 + 4\alpha_2 + 3\alpha_3 + \alpha_4)D_{-1} D_{-2} + (\alpha_1 + \alpha_2 + \alpha_3 + \alpha_4)D_{-1} D_2 D_{-2} \\
& \quad\quad = -161311 \equiv 4 \not\equiv 0~(\rm{mod}~5).
\end{align*}

The polynomial $f$ satisfies the three conditions of the case I, therefore the dynamical system$(\Z_5, f)$ minimal. In fact, we can check directly by computing $(f^n (0)~\rm{mod}~25)$ for $n = 0,1,2,\cdots, 25$. From $0$ to $f^{25}(0)$, we list as follows:
$ 0 \rightarrow 1\rightarrow 7 \rightarrow 23 \rightarrow 14 \rightarrow 20 \rightarrow 21 \rightarrow 2 \rightarrow 18 \rightarrow 9 \rightarrow 15 \rightarrow 16 \rightarrow 22 \rightarrow 13 \rightarrow 4 \rightarrow 10 \rightarrow 11 \rightarrow 17 \rightarrow 8 \rightarrow 24 \rightarrow 5 \rightarrow 6 \rightarrow 12 \rightarrow 3 \rightarrow 19 \rightarrow 0.$

Since $f_{/2}$ is minimal, by Proposition \ref{prop:minimal}, $f$ is minimal.
\end{proof}

\section{Discussion}\label{sec:discu}

We provided a complete characterization of minimality of polynomial dynamical systems on the set of $p$-adic integers for $p=5$, using an extended method of the proof in \cite{DP}. We may expect that our method can apply to characterize the minimality of polynomial dynamical systems for a general prime $p$.

%\oneappendix % use \appendix if you have more than one appendix
%\section{About the bibliography}
%References in the bibliography should be listed alphabetically by
%the authors' surname(s) and, for the same set of authors, by
%publication year. Detailed formatting (italic, etc.) should be
%avoided; please concentrate on giving full and clear information,
%such as (for books) the \textit{name} and \textit{location} of the
%publisher and (for a book in a book series) the \textit{volume
%number}. Do not include papers `in preparation' in the bibliography;
%these are better mentioned in the main text only.
%
%\begin{acknowledgements}\label{ackref}
%The \verb"acknowledgements" environment may be used to acknowledge
%indebtedness to colleagues, host institutions and referees. Accounts
%of grants and financial support should be made as a footnote on the
%title page using the \verb"\extraline{}" command in the preamble.
%\end{acknowledgements}

% Important: Do not put any empty line here.
%\affiliationtwo{% in this example, one author has two addresses}
%   \\
%   Department of Mathematics\\
%   Korea University\\
%   Seoul 136-701\\
%   Republic of Korea
%   \email{mersenne@korea.ac.kr\\
%   heroesof@korea.ac.kr}}
% Important: Do not put any empty line here.
% Use \affiliationthree{} for any address positioned under \affiliationone
% Use \affiliationfour{}  for any address positioned under \affiliationtwo

%\affiliationthree{~} %inserts a space to make this field empty
%\affiliationfour{%
%   Kyunghwan Song\\
%   Department of Mathematics\\
%   Korea University\\
%   Seoul 136-701\\
%   Republic of Korea
%   \email{heroesof@korea.ac.kr}}

%
\end{document}